\newtheorem{theorem}{\textbf{Theorem}}
\newtheorem{proposition}{\textbf{Proposition}}
\newtheorem{conjecture}{\textbf{Conjecture}}
\newtheorem{example}{\textbf{Example}}
\newtheorem{lemma}{\textbf{Lemma}}
\begin{document}

\title{Casselman's Basis of Iwahori vectors and Kazhdan-Lusztig
  polynomials}
\author{Daniel Bump
  and Maki Nakasuji\footnote{This work was supported
by NSF grant DMS-1601026 and JSPS Grant-in-Aid for Young Scientists (B) 15K17519.}}
\maketitle

\begin{abstract}
  A problem in representation theory of $p$-adic groups
  is the computation of the \textit{Casselman basis} of
  Iwahori fixed vectors in the spherical principal series
  representations, which are dual to the intertwining
  integrals. We shall express the transition matrix
  $(m_{u,v})$ of the Casselman basis to another natural basis in
  terms of certain polynomials which are deformations
  of the Kazhdan-Lusztig R-polynomials. As an application
  we will obtain certain new functional equations
  for these transition matrices under the algebraic
  involution sending the residue cardinality $q$ to
  $q^{-1}$. We will also obtain a new proof of a
  surprising result of Nakasuji and Naruse that
  relates the matrix $(m_{u,v})$ to its inverse.
\end{abstract}

\section{Statement of Results}
We will state most of our results in this section, with proofs
in Section~\ref{proofssec}. A few more results will be stated
in Section~\ref{despro}.

Let $q$ be the residue cardinality of $F$ and $\mathfrak{o}$ its ring of
integers. Let $\hat{T} (\mathbb{C})$ be a split maximal torus in the
Langlands dual group $\hat{G} (\mathbb{C})$, a reductive algebraic group over
$\mathbb{C}$. Let $\Phi$ be the root system of $\hat{G}$ in the weight
lattice $X^{\ast} (\hat{T})$ of rational characters of $\hat{T}$ which we
identify with the group $X_{\ast} (T)$ of cocharacters in the maximal torus
$T$ of $G$ that is dual to $\hat{T}$. Let $B = T U$ be the Borel subgroup of
$G$ that is positive with respect to a decomposition of $\Phi$ into positive
and negative roots. Let $K$ be the standard (special) maximal compact
subgroup, and $J$ the positive Iwahori subgroup. The Weyl group $W = N_G (T
(F)) / T (F)$. We will choose Weyl group representatives from $K$.

If $\mathbf{z} \in \hat{T}$ then $\mathbf{z}$ parametrizes an unramified
character $\chi_{\mathbf{z}}$ of $T (F)$. The corresponding principal series
module $V_{\mathbf{z}}$ of $G (F)$ consists of smooth functions $f$ on $G
(F)$ such that $f (b g) = (\delta^{1 / 2} \chi_{\mathbf{z}}) (b) f (g)$ for
$b \in B (F)$. If $w \in W$, then choosing a Weyl group representative from
$K$, and by abuse of notation denoting it also as $w$, there is an
{\textit{intertwining integral operator}} $\mathcal{A}_w : V_{\mathbf{z}}
\longrightarrow V_{w\mathbf{z}}$ defined by the integral
\[ \mathcal{A}_w f (g) =_{} \int_{U \cap w U_- w^{- 1}} f (w^{-
   1} x) \, d x. \]
Here $U_-$ is the unipotent radical of the Borel subgroup $B_-$ opposite $B$,
and although the integral is only convergent for $\mathbf{z}$ in an open
subset of $\hat{T} (\mathbb{C})$, it extends meromorphically to all of
$\hat{T} (\mathbb{C})$ by analytic continuation. Casselman
{\cite{CasselmanSpherical}} and Casselman and
Shalika~{\cite{CasselmanShalika}} emphasized the importance of the functionals
$f \mapsto \mathcal{A}_w f (1)$ on the $| W |$-dimensional space
$V_{\mathbf{z}}^J$ of Iwahori fixed vectors.

The space $V_{\mathbf{z}}^J$ of Iwahori-fixed vectors in $V_{\mathbf{z}}$
then have several important bases parametrized by the Weyl group $W$.
One basis $\{\phi_w\}$ is obtained by restricting the standard spherical
vector to the various cells in the Bruhat decomposition. That is, $G (F)$ is
the disjoint union over $w \in W$ of cells $B w J$, so if $w \in W$ we may
define
\[ \phi_w (b k) = \left\{ \begin{array}{lll}
     (\delta^{1 / 2} \chi_{\mathbf{z}}) (b) & \text{if $k \in$} J w J, & \\
     0 & \text{otherwise} . & 
   \end{array} \right. \]
For us, a more useful basis is
\[ {\psi}_w = \sum_{u \geqslant w} \phi_u, \]
where $\geqslant$ is the Bruhat order in $W$.

Another more subtle basis than the $\{\phi_w\}$ or $\{\psi_w\}$
was defined in {\cite{CasselmanSpherical}} to be dual to the functionals
$f \mapsto \mathcal{A}_w f (1)$. Thus $\mathcal{A}_w f_{w'}(1)=\delta_{w,w'}$.
Casselman wrote:
\begin{quotation}It is an unsolved problem and, as far as I can see, a
    difficult one to express the bases $\{\phi_w\}$ and $\{f_w\}$ in terms of
    one another.
\end{quotation}
It seems more natural to ask for the transition function between
the bases $\{f_w\}$ and $\{\psi_w\}$, and we will interpret the
``Casselman problem'' to mean this question.

The difficulty of this problem did not prevent the use
of the Casselman basis $\{f_w\}$ in applications, for as
Casselman~\cite{CasselmanSpherical} and
Casselman-Shalika~\cite{CasselmanShalika} showed, a
small amount of information about the Casselman basis
can be used to compute special functions such as the
spherical and Whittaker functions. This is an idea that
has been used in a great deal of subsequent
literature. Because detailed information about
the Casselman basis is not needed for these
proofs, the Casselman problem has not seemed
urgent. Nevertheless, the Casselman problem is very
interesting in its own right because of a deep
underlying structure similar to Kazhdan-Lusztig
theory.

Before continuing, we remark that we will often
find functions $(u,v)\mapsto a_{u,v}$ on $W\times W$ such that
$a_{u,v}$ vanishes unless $u\leqslant v$. It is
convenient to think of $(a_{u,v})_{u,v\in W}$
as a matrix whose index set is the Weyl group.
Its product with another such matrix $(b_{u,v})$
is $(c_{u,v})$ where
\[c_{u,v}=\sum_{u\leqslant x\leqslant v} a_{u,x}b_{x,v}\,.\]
An important special case is the matrix $(a_{u,v})$
where $a_{u,v}=1$ if $u\leqslant v$ and $0$ otherwise. Then a theorem
of Verma which we will often use, is that if
$(b_{u,v})$ is the inverse matrix, then
$b_{u,v}=(-1)^{l(v)-l(u)}$ when $u\leqslant v$.
This the M\"obius function for the Bruhat order. See
\cite{Verma,Stembridge}.

Applying Casselman's functionals to the basis
$\{\psi_w\}$ give numbers
\[ m_{u, v} =\mathcal{A}_v {\psi}_u (1), \]
and these are the subject of this paper, as well as {\cite{BumpNakasuji}}.
This is zero unless $u \leqslant v$ in the Bruhat order.

We also
let $m'_{u,v}$ (denoted $\widetilde{m}_{u,v}$ in \cite{BumpNakasuji})
denote the inverse matrix so that
\[\sum_{u\leqslant x\leqslant v}m_{u,x}m'_{x,v}=\delta_{u,v},\]
where $\delta$ is the Kronecker delta. Clearly
\[\psi_{v}=\sum_{u\leqslant v} m_{u,v}\,f_u,\qquad
f_{v}=\sum_{u\leqslant v} m'_{u,v}\,\psi_u,\]
so the essence of the Casselman problem is to understand
the $m_{u,v}$ and~$m'_{u,v}$. We will give a kind of
solution to this problem by showing that the $m_{u,v}$
and $m'_{u,v}$ can expressed in terms of certain
polynomials which are deformations of the
Kazhdan-Lusztig R-polynomials.

First, review two conjectures from our previous
paper~\cite{BumpNakasuji}. Let $P_{u, v}$ be the
Kazhdan-Lusztig polynomials for $W$, defined as in
{\cite{KazhdanLusztig}}. We will also use the
{\textit{inverse Kazhdan-Lusztig polynomial}} $Q_{u, v}
= P_{w_0 v, w_0 u}$, where $w_0$ is the long Weyl group
element. Both $P_{u, v}$ and $Q_{u, v}$ vanish unless $u
\leqslant v$.

If $\alpha \in \Phi$, let $r_{\alpha}$ denote the corresponding reflection in
$W$. Assume that $u \leqslant v$. Define
\[ S (u, v) = \{ \alpha \in \Phi^+ |u \leqslant v.r_{\alpha} < v \},\qquad
S' (u, v) = \{ \alpha \in \Phi^+ |u \leqslant u.r_{\alpha} < v \}\,. \]
It is a consequence of work of Deodhar~\cite{DeodharConjecture},
Carrell and Peterson~\cite{Carrell}, Polo~\cite{Polo}, Dyer~\cite{Dyer} and
Jantzen~\cite{JantzenGewicht} that the sets $S(u,v)$ and
$S'(u,v)$ have cardinality $\geqslant l (v) - l (u)$. Moreover if
the inverse Kazhdan-Lusztig polynomial $Q_{u, v} = 1$,
then $| S (u, v) | = l (v) - l (u)$, while if $P_{u,v}=1$
then $| S' (u, v) | = l (v) - l (u)$.

In {\cite{BumpNakasuji}} we conjectured that if $\Phi$ is simply-laced
and $Q_{u, v} = 1$, then
\begin{equation}
  \label{conjecture} m_{u, v} = m_{u, v} (\mathbf{z}) = \prod_{\alpha \in S
  (u, v)} \frac{1 - q^{- 1} \mathbf{z}^{\alpha}}{1 -\mathbf{z}^{\alpha}} .
\end{equation}
This formula generalizes the well-known {\textit{formula of Gindikin and
Karpelevich}}, which is actually due to Langlands~{\cite{LanglandsEuler}} in
this nonarchimedean setting. This is the special case where $u = 1$, so that
${\psi}_1$ is the $K$-spherical vector in $V_{\mathbf{z}}$. However
the method commonly used to prove the formula of Gindikin and Karpelevich
inductively does not work for general $u$, and this conjecture still seems
difficult.  See {\cite{LeeLenartEtAl}} and {\cite{NakasujiNaruse}} for recent
work on this problem, and Section~\ref{despro} below for some new results
based on the methods of this paper.

Similarly, if $P_{u,v}=1$, then $|S'(u,v)|=l(v)-l(u)$, and in this
case we conjectured that
\begin{equation}
  \label{altconjecture}
m'_{u,v}=(-1)^{l(v)-l(u)}\,\prod_{\alpha \in S'
  (u, v)} \frac{1 - q^{- 1} \mathbf{z}^{\alpha}}{1 -\mathbf{z}^{\alpha}} .
\end{equation}
It was shown by Nakasuji and Naruse~\cite{NakasujiNaruse} that these
two conjectured formulas (\ref{conjecture}) and (\ref{altconjecture})
are equivalent. They did this by proving a very
interesting fact relating the matrices $(m_{u,v})$ and
$(m'_{u,v})$ which we will reprove in this paper as
Theorem~\ref{mmduality} below.

In this paper we will not prove these conjectures. Instead we
will strive to adapt methods of Kazhdan and
Lusztig~\cite{KazhdanLusztig} to this situation.
For example the above conjectures may be thought
of as closely related to their formula (2.6.b).

Our algebraic results about $m_{u, v}$ are independent of the origin of the
problem in $p$-adic groups. So we may regard $q$ as an indeterminate. If
$f$ is a polynomial in $q$, following Kazhdan and Lusztig, $\overline{f}$ will
denote the result of replacing $q$ by $q^{- 1}$. If $f$ involves
$\mathbf{z}$, then $\mathbf{z}$ is unchanged in $\overline{f}$ unless we
explicitly indicate a change. We will also the notations $\varepsilon_w = (-
1)^{l (w)}$ and $q_w = q^{l (w)}$ from~{\cite{KazhdanLusztig}}.

Assume that $Q_{u, v} = 1$, that $\Phi$ is simply-laced so that
(\ref{conjecture}) is conjectured, and moreover $| S (u, v) | = l (v) - l
(u)$. Observe that $m_{u, v}$ satisfies the functional equation
\begin{equation}
  \label{functionalequation} \overline{m_{u, v}} (\mathbf{z}) = q_v q_u^{-
  1} m_{u, v} (\mathbf{z}^{- 1}) .
\end{equation}

\begin{theorem}
  \label{qonefe}
  Assume that $Q_{u, v} = 1$. Then the functional equation
  (\ref{functionalequation}) is satisfied.
\end{theorem}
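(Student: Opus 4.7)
My first step would be to confirm the ``Observe'' preceding the theorem by a direct computation on the conjectural formula~(\ref{conjecture}). Writing $f_{\alpha}(\mathbf{z})=(1-q^{-1}\mathbf{z}^{\alpha})/(1-\mathbf{z}^{\alpha})$, one finds by an elementary rearrangement that
\[
\overline{f_{\alpha}}(\mathbf{z})=\frac{1-q\mathbf{z}^{\alpha}}{1-\mathbf{z}^{\alpha}}=q\cdot f_{\alpha}(\mathbf{z}^{-1}).
\]
Multiplying over $\alpha\in S(u,v)$ produces an overall factor $q^{|S(u,v)|}$; since the hypothesis $Q_{u,v}=1$ forces $|S(u,v)|=l(v)-l(u)$ by the Carrell--Peterson--Polo--Dyer--Jantzen theorem cited above, this factor is exactly $q_vq_u^{-1}$. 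So on the conjectural formula the functional equation is automatic, and the calculation pinpoints $|S(u,v)|=l(v)-l(u)$ as the real source of the exponent.

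To prove the theorem in general --- without assuming $\Phi$ simply-laced and without assuming (\ref{conjecture}) --- my plan is to reduce the functional equation to a Kazhdan--Lusztig-style bar-invariance for the ``deformed $R$-polynomials'' that the paper announces in the abstract. Granting an expression of the form
\[
m_{u,v}(\mathbf{z})=\frac{R^{\mathrm{def}}_{u,v}(q,\mathbf{z})}{\prod_{\alpha\in A(u,v)}(1-\mathbf{z}^{\alpha})}
\]
for an appropriate index set $A(u,v)\subseteq\Phi^{+}$ of size $l(v)-l(u)$, I would derive (\ref{functionalequation}) by combining three inputs: (i) a bar-involution identity for $R^{\mathrm{def}}_{u,v}$ analogous to the classical identity $\overline{R_{u,v}}(q)=q^{l(u)-l(v)}R_{u,v}(q)$, up to a controlled monomial in $\mathbf{z}$; (ii) the trivial denominator identity $1-\mathbf{z}^{-\alpha}=-\mathbf{z}^{-\alpha}(1-\mathbf{z}^{\alpha})$, which absorbs the monomial from (i); and (iii) the degree count $|A(u,v)|=l(v)-l(u)$, to be supplied by $Q_{u,v}=1$. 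The exponent $q_vq_u^{-1}$ should then fall out by matching the $q$-degrees on both sides.

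The main obstacle is establishing the bar-invariance in (i) with the correct power of $q$. In the classical Kazhdan--Lusztig setting this comes for free from the bar involution on the Hecke algebra, but here one must propagate the identity in the presence of the spectral parameter $\mathbf{z}$. I would attempt this by induction on $l(v)-l(u)$, using the cocycle factorization $\mathcal{A}_v=\mathcal{A}_s\mathcal{A}_{v'}$ for a reduced expression $v=sv'$ to produce a Kazhdan--Lusztig-type recursion for $R^{\mathrm{def}}_{u,v}$, and then checking that the elementary rank-one factor $(1-q^{-1}\mathbf{z}^{\alpha_s})/(1-\mathbf{z}^{\alpha_s})$ introduced at each step transforms under the bar involution in exactly the way recorded in the first paragraph. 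The role of the hypothesis $Q_{u,v}=1$ would be relegated to the very last step of this bookkeeping, namely to ensure $|A(u,v)|=l(v)-l(u)$ so that no residual power of $q$ remains.
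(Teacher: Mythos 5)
Your first paragraph correctly verifies the observation that precedes the theorem, i.e.\ that the conjectural product formula is bar-covariant with the right power of $q$ --- but that is only a sanity check on (\ref{conjecture}), not a proof of the theorem, since the theorem is stated without assuming (\ref{conjecture}) or the simply-laced hypothesis.

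For the general argument there is a genuine gap in the shape of the bar-invariance you hope to use. The paper's deformed $R$-polynomials $r_{u,v}(\mathbf{z})$ do \emph{not} satisfy a clean monomial identity relating $\overline{r_{u,v}}(\mathbf{z})$ to $r_{u,v}(\mathbf{z}^{-1})$. What they satisfy (obtained from $\overline{\mu_{\mathbf{z}}(v)} = q_v\,\mu_{\mathbf{z}^{-1}}(v)$ together with the classical expansion $T_y^{-1}=\sum_u \overline{R_{u,y}}\,q_u^{-1}T_{u^{-1}}$) is the \emph{sum} identity
\[
\overline{r_{u,v}}(\mathbf{z}^{-1}) \;=\; q_v^{-1}\sum_{u\leqslant y\leqslant v} q_y\,\overline{R_{u,y}}\,r_{y,v}(\mathbf{z}),
\]
with genuine cross-terms coming from the classical Kazhdan--Lusztig $R$-polynomials. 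The hypothesis $Q_{u,v}=1$ is then used not as a terminal ``degree count'' to fix the exponent of $q$, but in a structurally essential way: it forces $Q_{u,x}=1$ for every $x$ in the Bruhat interval $[u,v]$, and, once the identity above is pushed through (\ref{mfromr}), M\"obius inversion on the interval, and the Kazhdan--Lusztig relation connecting $P$, $Q$, and $R$ (giving the paper's (\ref{qmrelation})), setting all these $Q_{u,x}$ equal to $1$ is precisely what makes the extra sum collapse. Also note that your proposed ansatz $m_{u,v}=R^{\mathrm{def}}_{u,v}/\prod_{\alpha\in A(u,v)}(1-\mathbf{z}^{\alpha})$ with $|A(u,v)|=l(v)-l(u)$ is itself only a conjecture in the paper (and $|S(u,v)|\geqslant l(v)-l(u)$ can fail to be equality), so it cannot be ``granted''. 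The inductive cocycle recursion for $r_{u,v}$ you sketch in the last paragraph is the right engine (it is the paper's Theorem~\ref{rrecursion}), but that recursion by itself yields only $\overline{r_{u,v}}=\varepsilon_u\varepsilon_v\,q_uq_v^{-1}\,r_{u,v}$ with $\mathbf{z}$ unchanged; passing to $\mathbf{z}^{-1}$ inevitably brings in the classical $R$-polynomials and, through them, the need for the Kazhdan--Lusztig inversion and the full strength of $Q_{u,v}=1$.
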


Note that this does not require $\Phi$ to be simply-laced, even though
(\ref{conjecture}) has counterexamples already for $B_2$.
Proofs will be in the next section.

The key to this and other results is to introduce a deformation of the Kazhdan
and Lusztig R-polynomials, defined in {\cite{KazhdanLusztig}}.

\begin{theorem}
  \label{rrecursion}
  There exist polynomials
  $r_{u, v} (\mathbf{z})$, depending on $\mathbf{z} \in \hat{T}
  (\mathbb{C})$ such that $r_{u,u}=1$ and $r_{u,v}=0$
  unless $u\leqslant v$. They have the property
  that $r_{u, v} (\mathbf{z}) \longrightarrow R_{u, v}$ if $\mathbf{z}
  \longrightarrow \infty$ in such a direction that $\mathbf{z}^{\alpha}
  \longrightarrow \infty$ for all positive roots $\alpha \in \Phi^+$. They may
  be calculated by the following recursion formula. Choose a simple
  reflection $s = s_{\alpha}$ corresponding to the simple root $\alpha$ such
  that $s v < v$.  If $s u < u$, then
  \[ r_{u, v} (\mathbf{z}) = \frac{1 - q}{1 - \mathbf{z}^{-v^{-1}\alpha}}
     r_{u, s v} (\mathbf{z}) + r_{s u, s v} (\mathbf{z}) . \]
  If $s u > u$, then
  \[ r_{u, v} (\mathbf{z}) = (1 - q) \frac{\mathbf{z}^{-v^{-1}\alpha}}{1 -
     \mathbf{z}^{-v^{-1}\alpha}} r_{u, s v} (\mathbf{z}) + q r_{s u, s v}
     (\mathbf{z}) . \]
\end{theorem}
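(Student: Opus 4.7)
The plan is to take the recursion itself as the definition and then prove it is consistent. Set $r_{u,u}=1$ and $r_{u,v}=0$ when $u\not\leqslant v$, and induct on $l(v)$. For $u\leqslant v$ with $v\ne 1$, choose any simple reflection $s=s_\alpha$ with $sv<v$, and define $r_{u,v}$ by whichever case of the stated recursion applies, depending on whether $su<u$ or $su>u$. The right-hand sides involve only $r_{x,sv}$ with $l(sv)<l(v)$, so the induction is well-posed; the usual lifting property of the Bruhat order guarantees that the terms on the right vanish exactly when they should.

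The main content is then to show that this definition is independent of the choice of $s$ with $sv<v$. Following the Kazhdan--Lusztig template, I would realize $r_{u,v}$ as a matrix coefficient of a Hecke-algebra-type action. Concretely, on the free module with basis $\{e_w\}_{w\in W}$ over a suitable ring of rational functions in $q$ and $\mathbf{z}$, for each simple $s=s_\alpha$ define an operator $\tau_s$ whose matrix coefficients in the $e$-basis are precisely the rational functions $(1-q)/(1-\mathbf{z}^{-v^{-1}\alpha})$, $(1-q)\mathbf{z}^{-v^{-1}\alpha}/(1-\mathbf{z}^{-v^{-1}\alpha})$, $1$ and $q$ appearing in the recursion, together with the exchange $e_v\leftrightarrow e_{sv}$ implicit in the cases $su<u$ and $su>u$. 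Once the $\tau_s$ are shown to satisfy the braid and quadratic relations of the affine Hecke algebra, $r_{u,v}(\mathbf{z})$ is identified (in parallel with the classical construction of $R_{u,v}$ from $T_v^{-1}$) with the coefficient of $e_u$ in $\tau_v^{-1}(e_1)$; the stated recursion then drops out on multiplying by $\tau_s^{-1}$ on the left, and independence of the chosen reduced expression (hence of $s$) is a formal consequence of the braid relations.

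The limit claim is then a direct computation. When $sv<v$, the root $-v^{-1}\alpha$ is positive, so $\mathbf{z}^{-v^{-1}\alpha}\to\infty$ in the prescribed limit, and
\[ \frac{1-q}{1-\mathbf{z}^{-v^{-1}\alpha}} \longrightarrow 0, \qquad (1-q)\,\frac{\mathbf{z}^{-v^{-1}\alpha}}{1-\mathbf{z}^{-v^{-1}\alpha}} \longrightarrow q-1. \]
Both cases of the recursion collapse to the classical Kazhdan--Lusztig recursions $R_{u,v}=R_{su,sv}$ (when $su<u$) and $R_{u,v}=qR_{su,sv}+(q-1)R_{u,sv}$ (when $su>u$), and induction on $l(v)$ yields $r_{u,v}\to R_{u,v}$.

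The hard part will be verifying the braid relations for the $\tau_s$ in the rank-two cases ($A_1\times A_1$, $A_2$, $B_2$, and especially $G_2$); a direct check is feasible but tedious. A cleaner route is to recognize the $\tau_s$ as Demazure--Lusztig-type operators tied to the intertwining operators $\mathcal{A}_w:V_{\mathbf{z}}\to V_{w\mathbf{z}}$ on Iwahori-fixed vectors. The appearance of $v^{-1}\alpha$ in the recursion (rather than $\alpha$) is precisely the signature of this intertwining structure, and in that framework the braid relations are inherited from those in the affine Hecke algebra already established in the literature.
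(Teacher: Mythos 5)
Your plan inverts the paper's logic: you take the recursion as the definition of $r_{u,v}$ and then try to prove it is consistent, whereas the paper starts from an object that is already known to be well-defined and derives the recursion from it. Concretely, the paper defines $r_{u,v}$ by the expansion
\[
\mu_{\mathbf{z}}(v) \;=\; \sum_{u\leqslant v} q_u^{-1}\,\overline{r_{u,v}}\;T_{u^{-1}}
\]
in the $T_w$-basis of the Hecke algebra, where $\mu_{\mathbf{z}}(w)$ is the Hecke-algebra model of the intertwining operator whose well-definedness (independence of reduced word) was already settled in \cite{BumpNakasuji} and rests on the Yang--Baxter equation \cite{NakasujiNaruse,LLT}. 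The recursion then drops out by writing $\mu_{\mathbf{z}}(v)=\mu_{\mathbf{z}}(sv)\,\mu_{sv\mathbf{z}}(s)$, noting $(sv\mathbf{z})^{\alpha}=\mathbf{z}^{-v^{-1}\alpha}$, and comparing coefficients of $T_{u^{-1}}$ (only $x=u,\,su$ contribute). The existence and consistency issues you correctly identify as ``the hard part'' are therefore never confronted head-on; they are inherited.

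Your own first-pass construction has a gap worth naming. You propose operators $\tau_s$ on the free module $\bigoplus_{w}\mathbb{C}(q,\mathbf{z})\,e_w$ whose matrix entries in the $e$-basis are the rational functions appearing in the recursion. But those entries are not functions of $s$ alone: the recursion involves $\mathbf{z}^{-v^{-1}\alpha}$, which depends on the element $v$ being processed, so a fixed operator $\tau_s$ over the untwisted coefficient ring cannot reproduce them. You notice exactly this (``the appearance of $v^{-1}\alpha$ \ldots is precisely the signature of this intertwining structure''), and the fix is the twisted multiplicativity $\mu_{\mathbf{z}}(w_1 w_2)=\mu_{\mathbf{z}}(w_2)\mu_{w_2\mathbf{z}}(w_1)$ in which each factor acts on a shifted parameter $w_2\mathbf{z}$. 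Your fallback --- recognize the $\tau_s$ as Demazure--Lusztig-type operators attached to the intertwiners, with braid relations inherited from the literature --- is the paper's actual argument, so as a route it is fine, but as written the proposal does not supply it. Your limit computation, $\mathbf{z}^{-v^{-1}\alpha}\to\infty$ collapsing the two cases to $R_{u,v}=R_{su,sv}$ and $R_{u,v}=qR_{su,sv}+(q-1)R_{u,sv}$, is correct and is exactly how the paper obtains $r_{u,v}\to R_{u,v}$.
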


\medbreak\noindent
In the recursion, it is worth noting that since $sv<v$, $-v^{-1}\alpha$ is a positive root.

Then the $m_{u,v}$ can be expressed in terms of the $r_{u,v}$ as follows.

\begin{theorem}
  \label{randmtheorem}
  Suppose that $u \leqslant v$. Then
  \begin{equation}
    \label{mfromr} m_{u, v} = \sum_{u \leqslant x \leqslant v} \overline{r_{x,
    v}}
  \end{equation}
  and
  \begin{equation}
    \label{rfromm} r_{u, v} = \sum_{u \leqslant x \leqslant v} \varepsilon_u
    \varepsilon_x \overline{m_{x, v}} .
  \end{equation}
\end{theorem}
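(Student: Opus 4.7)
The two displayed equations are equivalent by M\"obius inversion on the Bruhat order. In the matrix notation introduced after Casselman's quotation, (\ref{mfromr}) reads $m = T \cdot \overline{r}$, where $T$ is the Bruhat incidence matrix with $T_{u,x}=1$ if $u\leqslant x$. By Verma's theorem, $T^{-1}$ has entries $\varepsilon_u\varepsilon_x$ for $u\leqslant x$; inverting (\ref{mfromr}) and applying the bar involution---which fixes the integer signs $\varepsilon_u\varepsilon_x$---turns it into (\ref{rfromm}). So it suffices to prove (\ref{mfromr}).

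I would prove (\ref{mfromr}) by induction on $l(v)$. The base case $v=1$ forces $u=1$, and both sides equal $1$. For the inductive step one needs a recursion for $m_{u,v}$ parallel in shape to the one for $r_{u,v}$ in Theorem~\ref{rrecursion}. Such a recursion is obtained from the intertwining-operator cocycle: if $s=s_\alpha$ is a simple reflection with $sv<v$, then $\mathcal{A}_v = \mathcal{A}_{sv}\mathcal{A}_s$, because $l(v)=l(sv)+1$. Expanding $\mathcal{A}_s\psi_u$ in the $\psi$-basis via the known Demazure--Lusztig-type action of a simple intertwiner on Iwahori-fixed vectors and then evaluating $\mathcal{A}_v\psi_u$ at the identity produces a recursion expressing $m_{u,v}$ as a rational-function combination of $m_{u,sv}$ and $m_{su,sv}$, split into the cases $su<u$ and $su>u$.

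With both recursions in hand, set $\widetilde{m}_{u,v}:=\sum_{u\leqslant x\leqslant v}\overline{r_{x,v}}$ and expand each summand using the bar-involuted form of the recursion of Theorem~\ref{rrecursion}. The sum over $x\in[u,v]$ must be partitioned using the lifting property of the Bruhat order: every such $x$ either satisfies $x\leqslant sv$ and $sx>x$, or has $sx\leqslant sv$ and $sx<x$. After reassembling the two parts, the result should coincide with the $m$-recursion applied to $\widetilde{m}_{u,sv}$ and $\widetilde{m}_{su,sv}$, which by the inductive hypothesis equal $m_{u,sv}$ and $m_{su,sv}$. Combining with the base case yields $\widetilde{m}_{u,v}=m_{u,v}$.

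The main obstacle is the matching of rational-function coefficients: one must check that those produced by summing and bar-involuting the $r$-recursion agree, after the Bruhat-interval split, with the coefficients arising independently in the intertwining-operator derivation of the $m$-recursion. Handling the two parity cases $su<u$ and $su>u$ separately, while tracking the factors of $\mathbf{z}^{-v^{-1}\alpha}$ and the substitution $q\mapsto q^{-1}$, is the most delicate part of the argument.
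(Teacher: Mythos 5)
Your reduction of (\ref{rfromm}) to (\ref{mfromr}) via M\"obius inversion and the bar involution is correct and matches the paper's use of Verma's theorem. But for (\ref{mfromr}) itself you propose an inductive, recursion-matching argument that is genuinely different from the paper's proof, and your sketch has a substantive gap that you yourself flag but do not close.

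Concretely: the recursion for $m_{u,v}$ ``from the intertwining-operator cocycle'' is neither stated nor derived. To obtain it you would have to compute the Demazure--Lusztig-type action of a simple intertwiner on the $\psi$-basis (or, equivalently, the effect of right multiplication by $\mu_{sv\mathbf{z}}(s)$ under the functional $\Lambda$), and you would have to handle the cases $su<u$ and $su>u$ separately. You would also need to be careful: Proposition~\ref{firstdescent} in the paper does give such a descent relation for $m$, but it is \emph{derived from} Theorem~\ref{randmtheorem}, so it cannot be invoked here without circularity. You then need to verify, after the lifting-property split of $[u,v]$ and the substitution $q\mapsto q^{-1}$, that the coefficients produced by the bar-involuted $r$-recursion reassemble into exactly the putative $m$-recursion. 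None of this is done; it is only described. There is also a subtlety in the pairing $x\leftrightarrow sx$: when $su>u$, an element $x\in[u,v]$ with $sx<x$ need not have $sx\geqslant u$, so the partition of $[u,v]$ is not the clean involution your sketch suggests.

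The paper's route avoids all of this. It starts from the already-established Hecke-algebra identity $m_{u,v}=\Lambda(\psi_u\,\mu_{\mathbf z}(v))$, plugs in the definition $\mu_{\mathbf z}(v)=\sum_{y\leqslant v}q_y^{-1}\overline{r_{y,v}}\,T_{y^{-1}}$ and $\psi_u=\sum_{x\geqslant u}T_x$, and applies Lemma~\ref{omegadelta} (which gives $\Lambda(T_xT_{y^{-1}})=q_x\delta_{x,y}$), collapsing the double sum directly to $\sum_{u\leqslant x\leqslant v}\overline{r_{x,v}}$. No induction and no recursion for $m$ is required. If you want to salvage your approach you would, at minimum, have to prove the $m$-recursion independently of Theorem~\ref{randmtheorem} and then carry out the coefficient bookkeeping in both parity cases; as it stands the central step of your argument is missing.
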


Proof will be in the next section.
We will deduce (\ref{functionalequation}) from this result. Moreover, we will
prove the following general identity. If $u \leqslant v$ define
\begin{equation}
  \label{cuvdefinition}
  c_{u, v} = \sum_{u \leqslant x \leqslant y \leqslant z \leqslant v}
  \varepsilon_x \varepsilon_y q_y^{- 1} q_u P_{x, y} \overline{Q_{y, z}}
  \varepsilon_z \varepsilon_v .
\end{equation}
(Let $c_{u,v}=0$ if $u$ is not $\leqslant v$.)

\begin{theorem}
  \label{fullerinversionthm}
  If $u \leqslant v$ then
  \begin{equation}
    \label{fullerinversion} \overline{m_{u, v}} (\mathbf{z}) = q_v q_u^{- 1}
    \sum_{u \leqslant w \leqslant v} c_{u, w} m_{w, v} (\mathbf{z}^{- 1}) .
  \end{equation}
\end{theorem}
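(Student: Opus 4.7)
The plan is to reduce (\ref{fullerinversion}) to a clean identity between the deformed $r$-polynomials and the Kazhdan--Lusztig $Q$-polynomials, then prove that identity by induction on $l(v)$. Encode Theorem~\ref{randmtheorem} matricially by setting $E_{u,v}=1$ when $u\leqslant v$ and $0$ otherwise, $D=\mathrm{diag}(\varepsilon_u)$, $\Delta=\mathrm{diag}(q_u)$, and letting $M,R,\mathbf{P},\mathbf{Q}$ denote the matrices with entries $m_{u,v},r_{u,v},P_{u,v},Q_{u,v}$. Verma's theorem gives $E^{-1}=DED$, and Theorem~\ref{randmtheorem} becomes $M=E\overline{R}$, whence $\overline{M}=ER$. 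Writing $M'(\mathbf{z})=M(\mathbf{z}^{-1})=E\overline{R'}$ with $R'(\mathbf{z})=R(\mathbf{z}^{-1})$, the target (\ref{fullerinversion}) is the matrix identity $\Delta\,\overline{M}\,\Delta^{-1}=CM'$, and the definition (\ref{cuvdefinition}) factors directly as $C=\Delta\,E\,D\mathbf{P}D\,\Delta^{-1}\,\overline{\mathbf{Q}}\,E^{-1}$. Substituting, cancelling the factor $\Delta E$ on the left, and applying the Kazhdan--Lusztig inversion $\sum_y\varepsilon_y\varepsilon_v P_{u,y}Q_{y,v}=\delta_{u,v}$, equivalently $(D\mathbf{P}D)^{-1}=\mathbf{Q}$, the statement collapses to
\begin{equation}\label{keyreduction}
q_u\sum_{u\leqslant y\leqslant v}Q_{u,y}\,r_{y,v}(\mathbf{z})\;=\;q_v\sum_{u\leqslant y\leqslant v}\overline{Q_{u,y}}\,\overline{r_{y,v}}(\mathbf{z}^{-1}).
\end{equation}

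Next I would prove (\ref{keyreduction}) by induction on $l(v)$. The base case $v=1$ forces $u=1$ and is trivial. For the inductive step, choose a simple reflection $s=s_\alpha$ with $sv<v$ and put $\gamma=-v^{-1}\alpha\in\Phi^+$. Applying Theorem~\ref{rrecursion} to each $r_{y,v}(\mathbf{z})$ on the left of (\ref{keyreduction}), split the sum over $y$ according as $sy<y$ or $sy>y$. A direct calculation from the recursion shows that the twisted quantity $\tilde r_{u,v}(\mathbf{z}):=q_vq_u^{-1}\overline{r_{u,v}}(\mathbf{z}^{-1})$ satisfies the recursion of Theorem~\ref{rrecursion} but with the two cases $su<u$ and $su>u$ interchanged. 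Substituting this into the right of (\ref{keyreduction}) produces a decomposition parallel to that of the left. Combining these two decompositions with the Kazhdan--Lusztig recursion for $Q_{u,y}=P_{w_0y,w_0u}$ under a suitable simple reflection, and applying the inductive hypothesis at level $(u,sv)$, the two sides can be matched.

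The main obstacle is the bookkeeping in the inductive step: across the $sy<y$ versus $sy>y$ decompositions one must match, term by term, the rational factors in $\mathbf{z}^\gamma$ and polynomial factors in $q$ produced by the $r$-recursion against those arising from the $Q$-recursion. The enabling observation is the swap of cases just mentioned: the combined involution $q\mapsto q^{-1}$, $\mathbf{z}\mapsto\mathbf{z}^{-1}$ exchanges the two branches of the $r$-recursion, and this is the formal analogue in our deformation of the $q\leftrightarrow q^{-1}$ symmetry underlying Kazhdan and Lusztig's original construction. Once (\ref{keyreduction}) is established, Theorem~\ref{qonefe} then follows by verifying that $c_{u,w}=\delta_{u,w}$ whenever $Q_{u,v}=1$, for which the combinatorial identity in the simple case $u=1,v=s$ already exhibited above is typical.
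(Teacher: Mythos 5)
Your matrix reduction is correct and, once unwound, lands exactly on the paper's own pivot: the identity you call (\ref{keyreduction}) is the paper's equation~(\ref{qmrelation}), and the deduction of (\ref{fullerinversion}) from it via $(D\mathbf{P}D)^{-1}=\mathbf{Q}$ is precisely the Kazhdan--Lusztig inversion step the paper uses. So the first half of your argument is essentially the paper's proof in compact matrix notation.

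The gap is in the second half. Your proposed induction rests on the claim that $\tilde r_{u,v}(\mathbf{z}):=q_vq_u^{-1}\overline{r_{u,v}}(\mathbf{z}^{-1})$ satisfies the recursion of Theorem~\ref{rrecursion} with the cases $su<u$ and $su>u$ interchanged. This is false. Writing $\gamma=-v^{-1}\alpha$, a direct computation from the two branches of the recursion gives
\[
\tilde r_{u,v}=(1-q)\frac{\mathbf{z}^{\gamma}}{1-\mathbf{z}^{\gamma}}\,\tilde r_{u,sv}+\tilde r_{su,sv}\quad(su<u),
\qquad
\tilde r_{u,v}=\frac{1-q}{1-\mathbf{z}^{\gamma}}\,\tilde r_{u,sv}+q\,\tilde r_{su,sv}\quad(su>u).
\]
The rational factor in $\mathbf{z}^{\gamma}$ does swap between the two branches, but the coefficient of the $\tilde r_{su,sv}$ term does not: $q$ still appears only when $su>u$. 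So $\tilde r$ obeys a genuinely third recursion, not a case swap, and the "matching term by term" in your inductive step loses its organizing principle. Compounding this, the $Q$-polynomial recursion needed to decompose $Q_{u,y}$ in the inductive step carries Kazhdan--Lusztig $\mu$-function correction terms; nothing in your sketch accounts for them, and they have no counterpart in the $r$-recursion to cancel against. The paper avoids both difficulties by proving (\ref{qmrelation}) non-inductively: Lemma~\ref{muzinvlemma} gives $\overline{\mu_{\mathbf{z}}(v)}=q_v\mu_{\mathbf{z}^{-1}}(v)$, comparing coefficients after expanding $T_y^{-1}$ via \cite{KazhdanLusztig}~(2.0.a) yields $\overline{r_{u,v}}(\mathbf{z}^{-1})=q_v^{-1}\sum_{u\leqslant y\leqslant v}q_y\,\overline{R_{u,y}}\,r_{y,v}(\mathbf{z})$, and then the $R$-weighted sum of $\overline{Q}$'s is collapsed using the identity $Q_{u,y}=q_u^{-1}q_y\sum_w\overline{Q_{u,w}}\,\overline{R_{w,y}}$ from \cite{KazhdanLusztig}~(2.2.a). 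That $R$-polynomial bridge is the missing ingredient in your plan: the $R$-recursion is $\mu$-free, so it mediates cleanly between the twist on $r$ and the twist on $Q$, whereas trying to induct directly on the $r$- and $Q$-recursions runs into the obstructions above.
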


Proof will be in the next section.
The coefficients $c_{u, v}$ are interesting. If $u=v$ then $c_{u,v}=1$,
but otherwise they are usually zero. The 46 pairs $u,v$ with $c_{u,v}\neq 1$
and $u<v$ for the $A_4$ Weyl group are tabulated in Figure~\ref{a4case}. This includes
all 38 pairs of Weyl group elements with $u\prec v$ in the notation
of Kazhdan and Lusztig. This means that $l (v) - l (u)$ is odd
and $\geqslant 3$, and that the degree of $P_{u, v}$ is $\frac{1}{2} (l (v) -
l (u) - 1)$, the largest possible. But there are a few other values for which
$c_{u, v} \neq 0$.

\begin{figure}[h]
\[\begin{array}{|l|l|c|c||l|l|c|c|}
\hline
\qquad u & \quad v & c_{u,v} & u\prec v &&&&\\
\hline
\scriptstyle{s_{3}s_{2}} & \scriptstyle{s_{3}s_{4}s_{2}s_{3}s_{1}s_{2}} &\scriptstyle{ q^{-1} - q^{-3} } & &
\scriptstyle{s_{4}s_{1}s_{2}} & \scriptstyle{s_{1}s_{2}s_{3}s_{4}s_{3}s_{2}} &\scriptstyle{ q^{-1} - q^{-2} } & \checked \\
\hline
\scriptstyle{s_{3}s_{1}} & \scriptstyle{s_{3}s_{4}s_{2}s_{3}s_{1}} &\scriptstyle{ q^{-1} - q^{-2} } & \checked &
\scriptstyle{s_{4}s_{2}} & \scriptstyle{s_{4}s_{2}s_{3}s_{1}s_{2}} &\scriptstyle{ q^{-1} - q^{-2} } & \checked \\
\hline
\scriptstyle{s_{4}s_{1}s_{2}s_{1}} & \scriptstyle{s_{1}s_{2}s_{3}s_{4}s_{3}s_{2}s_{1}} &\scriptstyle{ q^{-1} - q^{-2} } & \checked &
\scriptstyle{s_{3}s_{1}s_{2}} & \scriptstyle{s_{3}s_{4}s_{2}s_{3}s_{1}s_{2}} &\scriptstyle{ q^{-1} - q^{-2} } & \checked \\
\hline
\scriptstyle{s_{2}s_{3}s_{2}} & \scriptstyle{s_{2}s_{3}s_{4}s_{1}s_{2}s_{3}} &\scriptstyle{ q^{-1} - q^{-2} } & \checked &
\scriptstyle{s_{1}s_{2}s_{3}s_{2}} & \scriptstyle{s_{3}s_{4}s_{1}s_{2}s_{3}s_{1}s_{2}} &\scriptstyle{ q^{-1} - q^{-2} } & \checked \\
\hline
\scriptstyle{s_{4}s_{2}} & \scriptstyle{s_{2}s_{3}s_{4}s_{3}s_{1}s_{2}} &\scriptstyle{ -q^{-1} + q^{-3} } &  &
\scriptstyle{s_{2}s_{3}s_{2}} & \scriptstyle{s_{2}s_{3}s_{4}s_{1}s_{2}s_{3}s_{1}s_{2}} &\scriptstyle{ q^{-2} - q^{-3} } & \checked \\
\hline
\scriptstyle{s_{3}s_{4}s_{1}s_{2}s_{1}} & \scriptstyle{s_{1}s_{2}s_{3}s_{4}s_{2}s_{3}s_{1}s_{2}} &\scriptstyle{ q^{-1} - q^{-2} } & \checked &
\scriptstyle{s_{3}s_{4}s_{3}s_{1}} & \scriptstyle{s_{1}s_{2}s_{3}s_{4}s_{2}s_{3}s_{2}s_{1}} &\scriptstyle{ -q^{-1} + q^{-3} } &  \\
\hline
\scriptstyle{s_{4}s_{2}} & \scriptstyle{s_{2}s_{3}s_{4}s_{3}s_{2}} &\scriptstyle{ q^{-1} - q^{-2} } & \checked &
\scriptstyle{s_{3}s_{4}s_{3}s_{1}s_{2}s_{1}} & \scriptstyle{s_{1}s_{2}s_{3}s_{4}s_{2}s_{3}s_{1}s_{2}s_{1}} &\scriptstyle{ q^{-1} - q^{-2} } & \checked \\
\hline
\scriptstyle{s_{3}s_{1}} & \scriptstyle{s_{1}s_{2}s_{3}s_{2}s_{1}} &\scriptstyle{ q^{-1} - q^{-2} } & \checked &
\scriptstyle{s_{4}s_{2}s_{3}s_{1}} & \scriptstyle{s_{2}s_{3}s_{4}s_{1}s_{2}s_{3}s_{2}s_{1}} &\scriptstyle{ q^{-1} - q^{-3} } &  \\
\hline
\scriptstyle{s_{3}s_{4}s_{1}} & \scriptstyle{s_{1}s_{2}s_{3}s_{4}s_{2}s_{1}} &\scriptstyle{ q^{-1} - q^{-2} } & \checked &
\scriptstyle{s_{2}} & \scriptstyle{s_{2}s_{3}s_{1}s_{2}} &\scriptstyle{ q^{-1} - q^{-2} } & \checked \\
\hline
\scriptstyle{s_{2}s_{3}s_{4}s_{2}} & \scriptstyle{s_{2}s_{3}s_{4}s_{2}s_{3}s_{1}s_{2}} &\scriptstyle{ q^{-1} - q^{-2} } & \checked &
\scriptstyle{s_{4}s_{1}s_{2}s_{1}} & \scriptstyle{s_{1}s_{2}s_{3}s_{4}s_{3}s_{1}s_{2}s_{1}} &\scriptstyle{ -q^{-1} + q^{-3} } &  \\
\hline
\scriptstyle{s_{2}s_{3}s_{2}} & \scriptstyle{s_{3}s_{4}s_{2}s_{3}s_{1}s_{2}} &\scriptstyle{ q^{-1} - q^{-2} } & \checked &
\scriptstyle{s_{4}s_{2}s_{3}s_{2}} & \scriptstyle{s_{2}s_{3}s_{4}s_{1}s_{2}s_{3}s_{2}} &\scriptstyle{ q^{-1} - q^{-2} } & \checked \\
\hline
\scriptstyle{s_{4}s_{1}} & \scriptstyle{s_{1}s_{2}s_{3}s_{4}s_{3}s_{2}s_{1}} &\scriptstyle{ q^{-2} - q^{-3} } & \checked &
\scriptstyle{s_{4}s_{2}s_{3}} & \scriptstyle{s_{2}s_{3}s_{4}s_{1}s_{2}s_{3}} &\scriptstyle{ q^{-1} - q^{-2} } & \checked \\
\hline
\scriptstyle{s_{3}s_{1}} & \scriptstyle{s_{3}s_{4}s_{1}s_{2}s_{3}} &\scriptstyle{ q^{-1} - q^{-2} } & \checked &
\scriptstyle{s_{2}s_{3}} & \scriptstyle{s_{2}s_{3}s_{4}s_{1}s_{2}s_{3}} &\scriptstyle{ q^{-1} - q^{-3} } &  \\
\hline
\scriptstyle{s_{4}s_{2}s_{3}s_{2}s_{1}} & \scriptstyle{s_{2}s_{3}s_{4}s_{1}s_{2}s_{3}s_{2}s_{1}} &\scriptstyle{ q^{-1} - q^{-2} } & \checked &
\scriptstyle{s_{4}s_{3}s_{1}} & \scriptstyle{s_{4}s_{1}s_{2}s_{3}s_{2}s_{1}} &\scriptstyle{ q^{-1} - q^{-2} } & \checked \\
\hline
\scriptstyle{s_{1}s_{2}s_{3}s_{4}s_{3}s_{1}} & \scriptstyle{s_{1}s_{2}s_{3}s_{4}s_{1}s_{2}s_{3}s_{2}s_{1}} &\scriptstyle{ q^{-1} - q^{-2} } & \checked &
\scriptstyle{s_{3}s_{4}s_{1}s_{2}} & \scriptstyle{s_{1}s_{2}s_{3}s_{4}s_{2}s_{3}s_{1}s_{2}} &\scriptstyle{ q^{-1} - q^{-3} } &  \\
\hline
\scriptstyle{s_{4}s_{1}s_{2}s_{1}} & \scriptstyle{s_{1}s_{2}s_{3}s_{4}s_{3}s_{1}s_{2}} &\scriptstyle{ q^{-1} - q^{-2} } & \checked &
\scriptstyle{s_{3}s_{4}s_{3}s_{1}} & \scriptstyle{s_{3}s_{4}s_{1}s_{2}s_{3}s_{2}s_{1}} &\scriptstyle{ q^{-1} - q^{-2} } & \checked \\
\hline
\scriptstyle{s_{4}s_{2}} & \scriptstyle{s_{2}s_{3}s_{4}s_{1}s_{2}} &\scriptstyle{ q^{-1} - q^{-2} } & \checked &
\scriptstyle{s_{3}s_{4}s_{3}s_{1}} & \scriptstyle{s_{1}s_{2}s_{3}s_{4}s_{2}s_{3}s_{1}} &\scriptstyle{ q^{-1} - q^{-2} } & \checked \\
\hline
\scriptstyle{s_{2}s_{3}s_{4}s_{3}s_{1}} & \scriptstyle{s_{2}s_{3}s_{4}s_{1}s_{2}s_{3}s_{2}s_{1}} &\scriptstyle{ q^{-1} - q^{-2} } & \checked &
\scriptstyle{s_{4}s_{1}s_{2}s_{3}s_{1}} & \scriptstyle{s_{2}s_{3}s_{4}s_{1}s_{2}s_{3}s_{2}s_{1}} &\scriptstyle{ q^{-1} - q^{-2} } & \checked \\
\hline
\scriptstyle{s_{2}s_{3}s_{2}s_{1}} & \scriptstyle{s_{2}s_{3}s_{4}s_{1}s_{2}s_{3}s_{1}} &\scriptstyle{ q^{-1} - q^{-2} } & \checked &
\scriptstyle{s_{3}s_{1}} & \scriptstyle{s_{3}s_{4}s_{1}s_{2}s_{3}s_{1}} &\scriptstyle{ -q^{-1} + q^{-3} } &  \\
\hline
\scriptstyle{s_{3}s_{4}s_{3}s_{1}s_{2}} & \scriptstyle{s_{1}s_{2}s_{3}s_{4}s_{2}s_{3}s_{1}s_{2}} &\scriptstyle{ q^{-1} - q^{-2} } & \checked &
\scriptstyle{s_{2}s_{3}s_{1}} & \scriptstyle{s_{2}s_{3}s_{4}s_{1}s_{2}s_{3}} &\scriptstyle{ q^{-1} - q^{-2} } & \checked \\
\hline
\scriptstyle{s_{4}s_{1}s_{2}s_{1}} & \scriptstyle{s_{2}s_{3}s_{4}s_{3}s_{1}s_{2}s_{1}} &\scriptstyle{ q^{-1} - q^{-2} } & \checked &
\scriptstyle{s_{4}s_{2}s_{1}} & \scriptstyle{s_{2}s_{3}s_{4}s_{3}s_{2}s_{1}} &\scriptstyle{ q^{-1} - q^{-2} } & \checked \\
\hline
\scriptstyle{s_{3}s_{4}s_{3}s_{1}} & \scriptstyle{s_{1}s_{2}s_{3}s_{4}s_{3}s_{2}s_{1}} &\scriptstyle{ q^{-1} - q^{-2} } & \checked &
\scriptstyle{s_{3}} & \scriptstyle{s_{3}s_{4}s_{2}s_{3}} &\scriptstyle{ q^{-1} - q^{-2} } & \checked \\
\hline
\scriptstyle{s_{3}s_{4}s_{2}} & \scriptstyle{s_{3}s_{4}s_{2}s_{3}s_{1}s_{2}} &\scriptstyle{ q^{-1} - q^{-2} } & \checked &
\scriptstyle{s_{1}s_{2}s_{3}s_{4}s_{2}} &
\scriptstyle{s_{1}s_{2}s_{3}s_{4}s_{2}s_{3}s_{1}s_{2}}a &\scriptstyle{ q^{-1} - q^{-2} } & \checked \\
\hline
\end{array}
\]
\caption{The pairs $u$,$v$ in the $A_4$ Weyl group with $u<v$ and $c_{u,v}\ne 0$. The simple reflections
are $s_1$, $s_2$, $s_3$ and $s_4$. This list includes all 38 pairs with $u\prec v$ in the notation of Kazhdan and Lusztig (marked with $\checked$). Note that if $u\prec v$ then $c_{u,v}=q^{-1}-q^{-2}$ but there are a few other pairs $u,v$ with $c_{u,v}\neq 0$.}
\label{a4case}
\end{figure}

Finally we have a striking symmetry of the coefficients
$m_{u,v}$. Equation (\ref{second_mm}) in the following
theorem was proved previously by Nakasuji and
Naruse~\cite{NakasujiNaruse}. We will give another proof based on
Theorem~\ref{rrecursion}.

\begin{theorem}
  \label{mmduality}
  (Nakasuji and Naruse~\cite{NakasujiNaruse}.)
  Suppose that $u \leqslant v$. Then
  \begin{equation}
    \label{first_mm}
    \sum_{u \leqslant x \leqslant v} r_{u, x} \varepsilon_x \varepsilon_v
    r_{w_0 v, w_0 x} = \delta_{u, v},
  \end{equation}
  and
  \begin{equation}
    \label{second_mm}
    \sum_{u \leqslant x \leqslant v} m_{u, x} \varepsilon_x \varepsilon_v
     m_{w_0 v, w_0 x} = \delta_{u, v}.
  \end{equation}
\end{theorem}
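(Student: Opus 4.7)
Set $N(u,v) := \sum_{u \leq x \leq v} r_{u,x}\,\varepsilon_x\,\varepsilon_v\,r_{w_0 v, w_0 x}$, so that equation (\ref{first_mm}) asserts $N(u,v) = \delta_{u,v}$. The plan is to prove (\ref{first_mm}) directly by induction on $\ell(v)$ using the recursion of Theorem~\ref{rrecursion}, and then to derive (\ref{second_mm}) from (\ref{first_mm}) by combining with Theorem~\ref{randmtheorem}.

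For (\ref{first_mm}), the base case $v=1$ forces $u=1$ and gives $N(1,1)=r_{1,1}\cdot r_{w_0,w_0}=1$. For the inductive step, fix $u<v$, choose a simple reflection $s=s_\alpha$ with $sv<v$, and let $s'=w_0 s w_0$, which is a simple reflection with simple root $-w_0\alpha$. The key observation is that the recursion of Theorem~\ref{rrecursion} applied to the factor $r_{w_0 v, w_0 x}$ using $s'$ at the second index $w_0 x$ is valid precisely when $s'\cdot w_0 x < w_0 x$, and a direct computation shows this is equivalent to $sx > x$; moreover, the exponent $-(w_0 x)^{-1}(-w_0\alpha)$ simplifies to $x^{-1}\alpha$. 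By contrast, the recursion applied to $r_{u,x}$ using $s$ at the second index $x$ is valid precisely when $sx<x$, and produces the exponent $-x^{-1}\alpha$. Crucially, for a paired element $y$ with $sy>y$, the partner $sy$ satisfies $-(sy)^{-1}\alpha = y^{-1}\alpha$, so both expansions (one applied at $x=y$, the other at $x=sy$) produce the same $\mathbf{z}$-exponent. Pairing $[u,v]$ into $s$-orbits $\{y,sy\}$ with $sy>y$ and adding the two halves of each pair, the identity
\[
(1-q)\frac{\mathbf{z}^{y^{-1}\alpha}}{1-\mathbf{z}^{y^{-1}\alpha}} \;-\; \frac{1-q}{1-\mathbf{z}^{y^{-1}\alpha}} \;=\; q-1
\]
causes the $\mathbf{z}$-dependent coefficients to telescope, and the pair's contribution collapses to a $q$-polynomial combination of $r_{u,y}\,r_{w_0 sv, w_0 sy}$, $r_{su,y}\,r_{w_0 v, w_0 sy}$, and (in the $su<u$ case only) $r_{u,y}\,r_{w_0 v, w_0 sy}$. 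A second application of the recursion at $(w_0 v, w_0 y)$ using $s'$ rewrites $r_{w_0 v, w_0 sy}$ in terms of $r_{w_0 v, w_0 y}$ and $r_{w_0 sv, w_0 sy}$, and the resulting expression rearranges into a $\mathbb{Z}[q]$-linear combination of $N(u,sv)$ and $N(su,sv)$, which equal Kronecker deltas by the inductive hypothesis; tracking the residues yields $\delta_{u,v}=0$ for $u<v$.

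The hard part is the case analysis and bookkeeping. Four combinations ($sx \gtrless x$ and $su \gtrless u$) must be handled coherently, and the lifting property of the Bruhat order must be invoked to verify that each $s$-orbit $\{y,sy\}$ lies in $[u,v]$. When $su<u$ the element $x=u$ is unpaired (its would-be partner $su$ lies outside $[u,v]$), producing a boundary contribution that requires separate tracking; this is the source of the extra $(q-1)r_{u,y}\,r_{w_0 v, w_0 sy}$ term alluded to above, and checking that it combines with the main terms to give precisely $\delta_{u,v}$ (for $u<v$ equaling zero) is the delicate point.

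For (\ref{second_mm}), use Theorem~\ref{randmtheorem} to write $m_{u,x}=\sum_{u\leq y\leq x}\overline{r_{y,x}}$ and, after the substitution $z\mapsto w_0 z$ in the summation index, $m_{w_0 v,w_0 x}=\sum_{x\leq z\leq v}\overline{r_{w_0 z,w_0 x}}$. Substituting into the left-hand side of (\ref{second_mm}) and interchanging the order of summation,
\[
\sum_{u\leq x\leq v}m_{u,x}\,\varepsilon_x\,\varepsilon_v\,m_{w_0 v,w_0 x} \;=\; \varepsilon_v\sum_{u\leq y\leq z\leq v}\left(\sum_{y\leq x\leq z}\overline{r_{y,x}}\,\varepsilon_x\,\overline{r_{w_0 z,w_0 x}}\right).
\]
Applying the bar involution (which replaces $q$ by $q^{-1}$ and leaves $\mathbf{z}$ fixed) to (\ref{first_mm}) for the pair $(y,z)$, the inner sum equals $\varepsilon_z\delta_{y,z}$. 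The total collapses to $\varepsilon_v\sum_{u\leq y\leq v}\varepsilon_y$, which by the Verma M\"obius inversion recalled in the introduction equals $\varepsilon_v\varepsilon_u\delta_{u,v}=\delta_{u,v}$.
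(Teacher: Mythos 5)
Your derivation of (\ref{second_mm}) from (\ref{first_mm}) via Theorem~\ref{randmtheorem}, the interchange of summation, the bar involution, and Verma's M\"obius function is correct and essentially the same route the paper takes.

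Your treatment of (\ref{first_mm}) is a genuinely different approach and is where the proposal is incomplete. The paper does not do a direct induction at all. It first separately establishes Proposition~\ref{altrecursion}, a recursion satisfied by the inverse-matrix entries $r'_{u,v}$, and then observes that under the Bruhat-reversing substitution $u,v,s \mapsto w_0 v, w_0 u, w_0 s w_0$ in Theorem~\ref{rrecursion}, the quantity $\varepsilon_u\varepsilon_v\, r_{w_0 v, w_0 u}$ satisfies \emph{that same} recursion with the same initial condition. Uniqueness of the recursion then forces it to equal $r'_{u,v}$, which is exactly (\ref{first_mm}). This reduces the entire verification to a single comparison of two recursions and avoids any pairing or telescoping argument. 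Your proposal instead attempts the induction head-on, re-deriving implicitly what Proposition~\ref{altrecursion} packages, and the ``hard part'' you flag---the case analysis and the boundary bookkeeping---is precisely what you do not carry out, so the argument as written is a sketch rather than a proof.

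One concrete imprecision in that sketch: you assert that when $su<u$ the only unpaired element in $[u,v]$ is $x=u$. This is false. For example in $A_2$ take $s=s_1$, $u=s_1$, $v=s_1s_2s_1$; then $[u,v]=\{s_1,\,s_1s_2,\,s_2s_1,\,s_1s_2s_1\}$ and both $s_1$ and $s_1s_2$ have $s$-partners ($1$ and $s_2$) outside the interval. You can repair this by extending the sum to all of $W$ (using $r_{u,x}=0$ unless $u\leqslant x$ and $r_{w_0v,w_0x}=0$ unless $x\leqslant v$) so that every $s$-orbit is formally paired and the vanishing handles the boundary automatically; but that reformulation still leaves the telescoping identity to be checked in all four cases, which is not done. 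I would either supply that computation in full, or---much shorter---prove Proposition~\ref{altrecursion} first and then make the paper's two-line recursion-matching observation.
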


Proof will be in the next section.
Because $(m_{u, v}')$ was defined to be the inverse of the matrix $(m_{u,
v})$, the last result can be written $m'_{u, v} = \varepsilon_u \varepsilon_v
m_{w_0 v, w_0 u}$. This seems a remarkable fact.

We end this section with a conjecture about the poles of $m_{u,v}$.
As functions of $\mathbf{z}$, the function $r_{u, v} (\mathbf{z})$ is
analytic on the regular set of $\hat{T}$, that is, the subset of
$\mathbf{z}$ such that $\mathbf{z}^{\alpha} \neq 1$ for all $\alpha \in
\Phi$.

\begin{conjecture}
  The functions
  \[ \prod_{\beta \in S (u, v)} (1 -\mathbf{z}^{\beta}) m_{u, v}, \qquad
     \prod_{\beta \in S (u, v)} (1 -\mathbf{z}^{\beta}) r_{u, v} \]
  are analytic on all of $\hat{T} (\mathbb{C})$.
\end{conjecture}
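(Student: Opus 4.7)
The plan is to reduce the conjecture for $m_{u,v}$ to the corresponding statement for $r_{u,v}$, then prove the $r$-version by induction on $l(v)$ via the recursion of Theorem~\ref{rrecursion}. The reduction is immediate from~(\ref{mfromr}): $m_{u,v}=\sum_{u\leqslant x\leqslant v}\overline{r_{x,v}}$, the bar operation preserves $\mathbf{z}$-poles, and $S(x,v)\subseteq S(u,v)$ whenever $u\leqslant x\leqslant v$ directly from the definition. So analyticity of $\prod_{\beta\in S(x,v)}(1-\mathbf{z}^\beta)\,r_{x,v}$ for every $x$ with $u\leqslant x\leqslant v$ implies the same for $\prod_{\beta\in S(u,v)}(1-\mathbf{z}^\beta)\,m_{u,v}$.

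For the $r$-version I would induct on $l(v)$; the base $v=1$ is trivial. In the inductive step, pick simple $s=s_\alpha$ with $sv<v$, write $v'=sv$, and set $\alpha^*:=-v^{-1}\alpha\in\Phi^+$. One checks that $v r_{\alpha^*}=v'$ and $v\alpha^*<0$, so $\alpha^*$ lies in $S(u,v)$ precisely when $u\leqslant v'$. If $u\not\leqslant v'$, then $r_{u,v'}=0$ and the recursion collapses to a scalar multiple of $r_{su,v'}$; standard Bruhat-order arguments using the Lifting Property yield $S(su,v')\subseteq S(u,v)$, and the inductive hypothesis closes this case.

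The substantive case is $u\leqslant v'$, where both summands of the recursion can contribute poles. In general neither $S(u,v')$ nor $S(su,v')$ is contained in $S(u,v)$: for instance in type $A_2$ with $u=s_1$, $v=s_1s_2s_1$, $s=s_1$, $v'=s_2s_1$, the root $\alpha_1+\alpha_2$ lies in $S(u,v')\cap S(su,v')$ but not in $S(u,v)=\{\alpha_1,\alpha_2\}$. A direct computation does confirm the conjecture in this example, but only because the spurious pole at $\mathbf{z}^{\alpha_1+\alpha_2}=1$ cancels exactly between the two summands of the recursion. This cancellation of spurious poles, which the naive set-theoretic inclusion argument cannot see, is the main obstacle.

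To establish the cancellation I would attempt one of two routes. The first is to derive a Deodhar-style subexpression formula for $r_{u,v}$: a sum indexed by distinguished subexpressions of a reduced word for $v$ in which each term is an explicit rational function whose denominators are products of $(1-\mathbf{z}^\beta)$ for $\beta$ in a combinatorially controlled subset of the inversion set of $v$; one would then hope to show termwise that those $\beta$ all lie in $S(u,v)$. The second is to exploit the duality~(\ref{first_mm}) of Theorem~\ref{mmduality}: pairing $r_{u,v}$ with $r_{w_0v,w_0u}$ via the $w_0$-twist and sign, and using the recursion on the dual side to match the spurious-pole residues of the two summands with residues of strictly smaller $r$-polynomials, closing by induction. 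The hard part in either approach is the same: to reconcile the combinatorics of $S(u,v)$ with the denominators that arise from iterating the recursion, \emph{after} the non-trivial cancellations, rather than termwise.
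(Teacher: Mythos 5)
This statement is explicitly a \emph{conjecture} in the paper, and the paper does not prove it; what follows the conjecture in the text is only a discussion of why a naive inductive attempt fails. Your ``proposal'' mirrors that discussion rather than filling the gap. In particular: the reduction from $m_{u,v}$ to $r_{u,v}$ via $m_{u,v}=\sum_{u\leqslant x\leqslant v}\overline{r_{x,v}}$ together with $S(x,v)\subseteq S(u,v)$ is exactly the paper's; the observation that $\alpha^{*}=-v^{-1}\alpha\in S(u,v)$ precisely when $u\leqslant sv$ (so that the explicit pole in the recursion is always accounted for) is the paper's; and your $A_2$ example with $u=s_1$, $v=s_1s_2s_1$, $s=s_1$ is verbatim Example~\ref{polex}, which the authors use to show that $S(u,sv)$ and $S(su,sv)$ need not lie in $S(u,v)$. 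Your handling of the case $u\not\leqslant sv$ is correct (there $r_{u,sv}=0$, the recursion collapses, and $S(su,sv)\subseteq S(u,v)$ is exactly Proposition~\ref{seconddescent}(i)), but that is the easy case.

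The genuine gap is the one you yourself name: when $u\leqslant sv$, the recursion's two summands can each have a pole along $\mathbf{z}^{\beta}=1$ for some $\beta\notin S(u,v)$, and one must show these spurious poles cancel. The paper states flatly that ``at the moment we do not have a proof that such cancellation always occurs,'' and offers only the heuristic of switching to a different descent $s$ (which works in Example~\ref{polex} but is not shown to work in general). Your two proposed remedies --- a Deodhar-style distinguished-subexpression expansion of $r_{u,v}$ with termwise pole control, or a residue-matching argument via the duality~(\ref{first_mm}) --- are plausible research directions and go beyond what the paper attempts, but neither is carried out, and neither obviously succeeds: the whole difficulty, as you note, is that the combinatorics of $S(u,v)$ only match the denominators \emph{after} non-obvious cancellations, which is precisely what a termwise argument would need to avoid. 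In short, you have correctly reconstructed the authors' own analysis of the obstacle, not a proof; since the paper also has no proof, there is no proof here to compare against.
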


Since $m_{u, v} = \sum_{u \leqslant x \leqslant v} \overline{r_{x, v}}$ and $S
(x, v) \subseteq S (u, v)$ when $u \leqslant x \leqslant v$, the statement
about $m_{u, v}$ follows from the statement about $r_{u, v}$. Moreover, the
recursion in Theorem~\ref{rrecursion} gives a way of trying to prove this
recursively. So let us choose a simple reflection $s$ such that $s v < v$. It
is sufficient to show that $\prod_{\alpha \in S (u, v)} (1
-\mathbf{z}^{\beta})$ cancels the poles of both $r_{s u, s v}$ and of $(1
-\mathbf{z}^{- v^{- 1} \alpha})^{- 1} r_{u, s v}$.

The factor $(1 -\mathbf{z}^{- v^{- 1} \alpha})^{- 1}$ that appears with
$r_{u, s v}$ is cancelled for the following reason. It only appears if $r_{u,
s v} \neq 0$, that is, if $u \leqslant s v$. Now if this is so, then the
positive root $- v^{- 1} \alpha$ is in $S (u, v)$, because $v r_{- v^{- 1}
\alpha} = s v$ and then $u \leqslant s v$ implies $- v^{- 1} \alpha \in S (u,
v)$.

So the statement that $\prod_{\beta \in S (u, v)} (1 -\mathbf{z}^{\beta})$
cancels the poles of $r_{u, v}$ would follow recursively if we knew that $S
(u, s v)$ and $S (s u, s v)$ are both contained in $S (u, v)$. Unfortunately
this is not always true, as the following example shows.

\begin{example}
  \label{polex}Let $\Phi$ be the $A_2$ root system, with simple roots
  $\alpha_1$, $\alpha_2$ and corresponding simple reflections $s_1, s_2$. Let
  $u = s_1$, $v = s_1 s_2 s_1$ and $\beta = \alpha_1 + \alpha_2$. Then if we
  take $s = s_1$ we have $\beta \in S (u, s v)$ and $\beta \in S (s u, s v)$
  but $\beta \notin S (u, v)$. This means that the locus of
  $\mathbf{z}^{\beta} = 1$ is a pole of both terms in the recursion, but
  these poles cancel and it is not a pole of $r_{u, v} (\mathbf{z})$.
\end{example}

At the moment we do not have a proof that such cancellation always occurs, but
often it can be proved using a different descent. In Example~\ref{polex} with
$u, v$ and $\beta$ as given, we could instead take $s = s_2$, and then we find
that $\beta \notin S (u, s v)$ and $\beta \notin S (s u, s v)$, so $1
-\mathbf{z}^{\beta}$ does not divide the denominator of $r_{u, v}$.

\section{\label{proofssec}Proofs}
Let $\mathcal{H}$ be the Iwahori Hecke algebra of the Coxeter group $W$,
with basis elements $T_w$ for $w\in W$, such that $T_wT_{w'}=T_{ww'}$
if $l(ww')=l(w)l(w')$. Thus if $s$ is a simple reflection we
have $T_s^2=(q-1)T_s+q$, and the usual braid relations are satisfied.
We extend the scalars to the field of meromorphic functions on
$\widehat{T}(\mathbb{C})$. Then the Hecke algebra has another
basis which we will now describe. Let $\mathbf{z} \in \hat{T}
(\mathbb{C})$. If $s = s_{\alpha}$ is a simple reflection, and $\alpha$ is the
corresponding simple root, let ${\mu}_{\mathbf{z}} (s)$ be the element of the
Hecke algebra defined by
\[ {\mu}_{\mathbf{z}} (s) = q^{- 1} T_s + (1 - q^{- 1})
   \frac{\mathbf{z}^{\alpha}}{1 -\mathbf{z}^{\alpha}} = T_s^{- 1} +
   \frac{1 - q^{- 1}}{1 -\mathbf{z}^{\alpha}} . \]
It is shown in {\cite{BumpNakasuji}}, using ideas of Rogawski~\cite{Rogawski}
that we may extend this definition to
${\mu}_{\mathbf{z}} (w)$ for $w \in W$ such that if $l (w_1 w_2) = l
(w_1) + l (w_2)$ then
\[ {\mu}_{\mathbf{z}} (w_1 w_2) ={\mu}_{\mathbf{z}} (w_2)
   {\mu}_{w_2 \mathbf{z}} (w_1) . \]
The Hecke operator $\mu_w(\mathbf{z})$ models the intertwining
operator $\mathcal{A}_w:V_\mathbf{z}\to V_{w\mathbf{z}}$ as
is explained in \cite{Rogawski} or \cite{BumpNakasuji}.
It was clarified by Nakasuji and Naruse \cite{NakasujiNaruse} that the basis
$\mu_w$ is essentially the ``Yang-Baxter basis'' of
Lascoux, Leclerc and Thibon~\cite{LLT}, and the consistency
of the definition follows from the Yang-Baxter equation.
The appearance of the Yang-Baxter equation in the
context of $p$-adic intertwining operators is then related
to the viewpoint in Brubaker, Buciumas, Bump and Friedberg~\cite{BBBF}.

Suppose
that $s = s_{\alpha}$ is a simple reflection. Then it is easy to check by
direct computation that
\begin{equation}
  \label{mumuconstant} \mu_{\mathbf{z}} (s) \mu_{s\mathbf{z}} (s) =
  \frac{1 - q^{- 1} \mathbf{z}^{\alpha}}{1 -\mathbf{z}^{\alpha}} \cdot
  \frac{1 - q^{- 1} \mathbf{z}^{- \alpha}}{1 -\mathbf{z}^{- \alpha}} .
\end{equation}

\begin{lemma}
  \label{mumuela}Let $s = s_{\alpha}$ be a simple reflection. Then for any $w
  \in W$ we have $\mu_{\mathbf{z}} (w) \mu_{w\mathbf{z}} (s) = c \cdot
  \mu_{\mathbf{z}} (s w)$ where the constant
  \begin{equation}
    c = \left\{ \begin{array}{ll}
      1 & \text{if $s w > w$,}\\
      \frac{1 - q^{- 1} \mathbf{z}^{w^{- 1} \alpha}}{1 -\mathbf{z}^{w^{-
      1} \alpha}} \cdot \frac{1 - q^{- 1} \mathbf{z}^{- w^{- 1} \alpha}}{1
      -\mathbf{z}^{- w^{- 1} \alpha}} & \text{if $s w < w$.}
    \end{array} \right.
  \end{equation}
\end{lemma}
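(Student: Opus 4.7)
The plan is to split into two cases according to whether $sw>w$ or $sw<w$, and reduce everything to the multiplicative property of $\mu_{\mathbf{z}}$ together with the two-term identity (\ref{mumuconstant}).

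First I would dispose of the case $sw>w$. Here $l(sw)=l(w)+l(s)$, so taking $w_1=s$ and $w_2=w$ in the cocycle-style relation $\mu_{\mathbf{z}}(w_1w_2)=\mu_{\mathbf{z}}(w_2)\mu_{w_2\mathbf{z}}(w_1)$ gives immediately
\[
\mu_{\mathbf{z}}(sw)=\mu_{\mathbf{z}}(w)\,\mu_{w\mathbf{z}}(s),
\]
which is the desired identity with $c=1$.

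Next I would handle the case $sw<w$. Set $w'=sw$, so $w=sw'$ with $l(w)=l(w')+1$, and apply the same multiplicative property in the other direction to obtain
\[
\mu_{\mathbf{z}}(w)=\mu_{\mathbf{z}}(w')\,\mu_{w'\mathbf{z}}(s).
\]
Therefore
\[
\mu_{\mathbf{z}}(w)\,\mu_{w\mathbf{z}}(s)=\mu_{\mathbf{z}}(w')\,\bigl[\mu_{w'\mathbf{z}}(s)\,\mu_{w\mathbf{z}}(s)\bigr],
\]
and since $w=sw'$ acts on $\hat T(\mathbb{C})$ so that $w\mathbf{z}=s(w'\mathbf{z})$, the bracketed factor is exactly $\mu_{w'\mathbf{z}}(s)\mu_{s(w'\mathbf{z})}(s)$. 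Applying (\ref{mumuconstant}) with $\mathbf{z}$ replaced by $w'\mathbf{z}$ turns this bracket into the scalar
\[
\frac{1-q^{-1}(w'\mathbf{z})^{\alpha}}{1-(w'\mathbf{z})^{\alpha}}\cdot\frac{1-q^{-1}(w'\mathbf{z})^{-\alpha}}{1-(w'\mathbf{z})^{-\alpha}}.
\]

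The last step is just bookkeeping on the exponents: since $(w'\mathbf{z})^{\alpha}=\mathbf{z}^{w'^{-1}\alpha}$ and $w^{-1}=w'^{-1}s$, one has $w'^{-1}\alpha=-w^{-1}\alpha$, so the scalar above equals precisely the $c$ in the statement. Pulling this scalar out in front of $\mu_{\mathbf{z}}(w')=\mu_{\mathbf{z}}(sw)$ yields the claimed equality. The only potential obstacle is keeping the two parameters $\mathbf{z}$ and $w\mathbf{z}$ straight when invoking (\ref{mumuconstant}) and checking that $w^{-1}\alpha$ is really the exponent that appears; this is the sign computation $w'^{-1}\alpha=-w^{-1}\alpha$, which is routine once one observes that $s\alpha=-\alpha$.
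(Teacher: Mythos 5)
Your proof is correct and follows essentially the same route as the paper: for $sw>w$ both proofs invoke the multiplicative (cocycle) property directly, and for $sw<w$ both decompose $\mu_{\mathbf z}(w)=\mu_{\mathbf z}(sw)\mu_{sw\,\mathbf z}(s)$ and then apply (\ref{mumuconstant}). The only difference is that you spell out the exponent bookkeeping $(w'\,\mathbf z)^{\alpha}=\mathbf z^{(w')^{-1}\alpha}=\mathbf z^{-w^{-1}\alpha}$, which the paper leaves implicit.
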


\begin{proof}
  If $s w > w$ this follows from the definition of $\mu_{\mathbf{z}} (s w)$.
  In the other case, we write $\mu_{\mathbf{z}} (w) = \mu_{\mathbf{z}} (s
  w) \mu_{s w\mathbf{z}} (s)$, then apply (\ref{mumuconstant}).
\end{proof}

Let $\Lambda :
\mathcal{H} \longrightarrow \mathbb{C} (q)$ be the functional such that
$\Lambda (T_w) = 1$ if $w = 1$, and $0$
otherwise. Also, let $\psi_w =\sum_{u\geqslant w} T_w$.
We are reusing the notation $\psi_w$ used previously to denote
certain Iwahori fixed vectors, but we are leaving the
origins of the problem in the $p$-adic group behind, so
this reuse should not cause any confusion. Following
Rogawski~\cite{Rogawski}, there is a vector space
isomorphism between the Iwahori fixed vectors and the
Hecke algebra $\mathcal{H}$, and in this isomorphism,
the Iwahori fixed vectors $\psi_w$ correspond to the Hecke
elements~$\psi_w$.

In \cite{BumpNakasuji} we prove
\begin{equation}
  m_{u, v} = m_{u, v} (\mathbf{z}) = \Lambda (\psi_u
  {\mu}_{\mathbf{z}} (v)) .
\end{equation}
This will be the starting point of our proofs.

\begin{lemma}
  \label{omegadelta}If $u, v \in W$ then
  \begin{equation}
    \label{omegakd} \Lambda (T_u T_v) = \left\{ \begin{array}{ll}
      q_u  & \text{if $u = v^{- 1}$,}\\
      0 & \text{otherwise.}
    \end{array} \right.
  \end{equation}
\end{lemma}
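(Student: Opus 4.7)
The plan is to argue by induction on $l(v)$, exploiting the quadratic relation $T_s^2=(q-1)T_s+q$ to reduce multiplication by $T_s$ to an operation on shorter Weyl elements. The base $l(v)=0$ is immediate: $T_uT_1=T_u$ and $\Lambda(T_u)=\delta_{u,1}$, while $q_u=1$ precisely when $u=1$.

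For the inductive step, pick a simple reflection $s$ with $sv<v$ and set $v'=sv$, so that $l(v')=l(v)-1$ and $T_v=T_sT_{v'}$. Then $T_uT_v=(T_uT_s)T_{v'}$, and the standard identity
\[T_uT_s=\begin{cases} T_{us} & \text{if }us>u,\\ (q-1)T_u+qT_{us} & \text{if }us<u\end{cases}\]
splits the computation into two cases, each of which applies the induction hypothesis to $\Lambda(T_wT_{v'})$ for $w\in\{u,us\}$.

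In the case $us>u$, we get $\Lambda(T_uT_v)=\Lambda(T_{us}T_{v'})$, which by induction equals $q_{us}$ if $us=v'^{-1}$ and $0$ otherwise. The condition $us=v'^{-1}$ is equivalent to $u=v^{-1}$; but $u=v^{-1}$ together with $sv<v$ forces $us=(sv)^{-1}$ to have length $l(v)-1=l(u)-1$, i.e., $us<u$, contradicting the case hypothesis. Hence $u\neq v^{-1}$ in this case, and $\Lambda(T_uT_v)=0$, as required. In the case $us<u$, we obtain
\[\Lambda(T_uT_v)=(q-1)\Lambda(T_uT_{v'})+q\Lambda(T_{us}T_{v'}).\]
The first term vanishes, since $u=v'^{-1}$ would force $sv'<v'$, contradicting the reduced factorization $v=sv'$. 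The second term equals $q\cdot q_{us}=q^{l(u)}=q_u$ precisely when $us=v'^{-1}$, that is, when $u=v^{-1}$, and is $0$ otherwise. Combining the two cases gives exactly the claim.

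The only delicate point — and what I would call the main (minor) obstacle — is matching the inductively obtained conditions on $us$ back to conditions on $u$ via the chosen descent $v=sv'$, and verifying that the length comparisons rule out the unwanted contributions in each case. All of this is routine Hecke-algebra bookkeeping, and no substantive difficulty arises.
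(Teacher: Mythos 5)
Your proof is correct and follows essentially the same route as the paper's: induction using the quadratic relation $T_s^2=(q-1)T_s+q$ to reduce to shorter elements, with length comparisons ruling out the unwanted contributions. The only cosmetic differences are that you induct on $l(v)$ via a left descent of $v$ and argue by direct computation, whereas the paper inducts on $l(u)$ via a right descent of $u$, assumes $l(u)\leqslant l(v)$ without loss of generality, and frames the argument contrapositively by supposing $\Lambda(T_uT_v)\neq 0$.
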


\begin{proof}
  Without loss of generality $l (u) \leqslant l (v)$. Assume that $\Lambda (T_u
  T_v) \neq 0$. We will show that $u = v^{- 1}$ and that $\Lambda (T_u T_v) =
  q_u$. Proof is by induction on $l (u)$, so we assume $\Lambda (T_{u'} T_v)$
  is given by this formula for all $u' < u$ and for all $v$. The formula
  (\ref{omegakd}) is trivial if $u = 1$, so we may assume $u > 1$. Let $s$ be
  a simple reflection such that $u s < u$. Let $u' = u s$ and $v' = s v$.
  
  Suppose that $v' < v$. Then $T_v = T_s T_{v'}$ and $T_u = T_{u'} T_s$. Thus
  \begin{equation}
    \label{tutvx} T_u T_v = T_{u'} T_s^2 T_{v'} = (q - 1) T_{u'} T_s T_{v'} +
    q T_{u'} T_{v'} = (q - 1) T_{u'} T_v + q T_{u'} T_{v'}\,.
  \end{equation}
  Thus either $\Lambda (T_{u'} T_v) \neq 0$ or $\Lambda (T_{u'} T_{v'}) \neq 0$.
  By induction we have either $u' = v^{- 1}$ or $u' = (v')^{- 1}$. The first
  is not possible since $l (u') < l (v)$, so $u' = (v')^{- 1}$ and $u^{- 1} =
  v^{- 1}$. Now applying $\Lambda$ to (\ref{tutvx}) gives $\Lambda (T_u T_v) = q
  \Lambda (T_{u'} T_{v'}) = q q_{u'} = q_u$.
  
  The case $v' > v$ is easier. Then $\Lambda (T_u T_v) = \Lambda (T_{u'}
  T_{s_i} T_v) = \Lambda (T_{u'} T_{v'}) = 0$ since $l (u') < l (v')$. And $u$
  cannot equal $v^{- 1}$ since $s$ is a right descent of $u$ but not $v^{-
  1}$.
\end{proof}

We will make use of the Kazhdan-Lusztig involution $f \mapsto \overline{f}$ on
functions $f$ of $q$, $\mathbf{z}$. This is the map that sends $q \mapsto
q^{- 1}$ and $\mathbf{z} \mapsto \mathbf{z}$. We recall from
{\cite{KazhdanLusztig}} that is the map that sends $q \mapsto q^{- 1}$, and it
is extended to an automorphism the Hecke algebra by the map $T_w \mapsto
T_{w^{- 1}}^{- 1}$.

We define $r_{u, v} = r_{u, v} (\mathbf{z})$ by
\begin{equation}
  \label{ruvdef}
{\mu}_{\mathbf{z}} (v) = \sum_{u \leqslant v} q_u^{- 1}
\overline{r_{u, v}} \, T_{u^{- 1}} .
\end{equation}
We will prove Theorem~\ref{rrecursion} before Theorem~\ref{randmtheorem},
and Theorem~\ref{qonefe} last.

\begin{proof}[Proof of Theorem~\ref{rrecursion}]
  Beginning with (\ref{ruvdef}), 
  we may compute $\overline{r_{u, v}}$ by calculating the coefficient of
  $T_{u^{- 1}}$ in
  \[ {\mu} (v) ={\mu}_{\mathbf{z}} (s v) {\mu}_{s v\mathbf{z}}
     (s) = \left( \sum_{x \leqslant s v} q_x^{- 1} \overline{r_{x, s v}}
     T_{x^{- 1}} \right) \left( q^{- 1} T_s + (1 - q^{- 1}) \frac{(s
     v\mathbf{z})^{\alpha_i}}{1 - (s v\mathbf{z})^{\alpha}} \right) . \]
  Only $x = u$ or $s u$ can contribute to the coefficient of $T_{u^{- 1}}$.
  Comparing the coefficients of $T_{u^{-1}}$ and noting that
  $(s v\mathbf{z})^{\alpha}=\mathbf{z}^{-v^{-1}\alpha}$, the recursion
  formula is obtained.

  Now the Kazhdan-Lusztig R-polynomials satisfy a similar
  recurrence, at the beginning of Section~2 in~\cite{KazhdanLusztig}.
  So specializing $\mathbf{z}\to\infty$ in such a way that
  $\mathbf{z}^\alpha\to\infty$ for all positive roots, we
  see that $r_{u,v}\to R_{u,v}$. For this it is important
  that when $s$ is a left descent of $v$, the root $-v^{-1}\alpha$
  that appears in the recursion is positive.
\end{proof}

\begin{proposition}
  We have $r_{u, v} = 0$ unless $u \leqslant v$, and $r_{v, v} = 1$. Moreover
  $\overline{r_{u, v}} = \varepsilon_u \varepsilon_v q_u q_v^{- 1} r_{u, v}$.
\end{proposition}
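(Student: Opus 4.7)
The plan is to establish all three assertions simultaneously by induction on $l(v)$, driven by the recursion of Theorem~\ref{rrecursion}. The base case $v=1$ is immediate from (\ref{ruvdef}): since $\mu_{\mathbf{z}}(1)=T_1$, comparison of coefficients forces $r_{1,1}=1$, and the remaining two claims hold vacuously or trivially.

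For the inductive step, fix a simple reflection $s$ with $sv<v$, so the recursion writes $r_{u,v}$ as a linear combination of $r_{u,sv}$ and $r_{su,sv}$, both of which have second argument of strictly smaller length. For the vanishing statement, if $r_{u,v}\ne 0$ then at least one of $u\leqslant sv$ or $su\leqslant sv$ must hold by induction, and the Bruhat lifting property (in the form $u\leqslant v$ iff $\min(u,su)\leqslant sv$ whenever $sv<v$) then yields $u\leqslant v$. Specializing the $su<u$ branch of the recursion to $u=v$ gives $r_{v,v}=r_{sv,sv}$ (the other term vanishes because $v\not\leqslant sv$ forces $r_{v,sv}=0$ by induction), and this equals $1$ by induction.

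For the functional equation, I would apply the bar involution to both sides of the recursion and substitute the inductive identities $\overline{r_{u,sv}}=\varepsilon_u\varepsilon_{sv}q_u q_{sv}^{-1}r_{u,sv}$ and $\overline{r_{su,sv}}=\varepsilon_{su}\varepsilon_{sv}q_{su}q_{sv}^{-1}r_{su,sv}$. Using $\varepsilon_{sv}=-\varepsilon_v$, $q_{sv}=q^{-1}q_v$, and the analogous identities for $su$ (with $q_{su}=q^{\mp 1}q_u$ according to whether $su<u$ or $su>u$), the factor $\varepsilon_u\varepsilon_v q_u q_v^{-1}$ pulls out cleanly and the surviving bracket matches the original recursion term by term. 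For instance, in the $su<u$ case, the extra $q$ from $q_{sv}^{-1}$ converts $(1-q^{-1})$ into $q-1$, and the sign from $\varepsilon_{sv}=-\varepsilon_v$ flips it back to $(1-q)$, restoring the recursion; the $su>u$ case is parallel, with an extra factor of $q^2$ arising from $q_{su}q_{sv}^{-1}$ that is exactly absorbed by the $q^{-1}$ in $\overline{q}$.

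I expect no serious obstacle, since the recursion was designed to mirror the Kazhdan-Lusztig R-polynomial recursion and the functional equation is the direct $\mathbf{z}$-deformation of the classical identity $\overline{R_{u,v}}=\varepsilon_u\varepsilon_v q_u q_v^{-1} R_{u,v}$ of~\cite{KazhdanLusztig}. The only substantive task is the careful bookkeeping of signs and $q$-powers across the two branches of the recursion, together with noting that the $\mathbf{z}$-dependent coefficient $(1-\mathbf{z}^{-v^{-1}\alpha})^{-1}$ is fixed by the bar involution.
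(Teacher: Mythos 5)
Your proof is correct and is exactly the argument the paper has in mind; the paper's own proof is just the one-line remark that both assertions follow from Theorem~\ref{rrecursion} by induction on $l(v)$, and you have filled in that induction faithfully, including the correct bookkeeping of $\varepsilon_{sv}=-\varepsilon_v$, $q_{sv}=q^{-1}q_v$, and $q_{su}=q^{\pm 1}q_u$ in the two branches, and the appeal to the lifting property for the vanishing claim.
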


\begin{proof}
  Both assertions follow from Theorem~\ref{rrecursion} by induction on $l
  (v)$.
\end{proof}

If $u \leqslant v$ in $W$ we will denote by $[u, v]$ the Bruhat interval $\{ x
\in W|u \leqslant x \leqslant v \}$.

\begin{proof}[Proof of Theorem~\ref{randmtheorem}]
  By definition
  \[ m_{u, v} = \Lambda (\psi_u {\mu}_{\mathbf{z}} (v)) = \sum_{x
     \geqslant u} \sum_{y \leqslant v} q_y^{- 1} \overline{r_{y, v}} \Lambda
     (T_x T_{y^{- 1}}) . \]
  Equation (\ref{mfromr}) now follows from Lemma~\ref{omegadelta}. By Verma's
  theorem the M\"obius function on the Bruhat interval $[u, v]$ is $(x, y)
  \mapsto \varepsilon_x \varepsilon_y$. (See {\cite{Stembridge}}.) Thus
  (\ref{rfromm}) follows from (\ref{mfromr}).
\end{proof}

\begin{lemma}
  \label{muzinvlemma}We have
  \[ \overline{{\mu}_{\mathbf{z}} (w)} = q_w {\mu}_{\mathbf{z}^{-
     1}} (w) . \]
\end{lemma}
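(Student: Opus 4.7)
The plan is to induct on $l(w)$, using the multiplicativity of $\mu_{\mathbf{z}}$ and the fact that the bar operation is a ring automorphism of the Hecke algebra (extending $q\mapsto q^{-1}$, $T_w \mapsto T_{w^{-1}}^{-1}$, $\mathbf{z}\mapsto\mathbf{z}$).

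For the base case $l(w)=1$, I would compute directly. Writing $\mu_{\mathbf{z}}(s) = T_s^{-1} + (1-q^{-1})/(1-\mathbf{z}^{\alpha})$ and applying the bar yields
\[ \overline{\mu_{\mathbf{z}}(s)} = T_s + \frac{1-q}{1-\mathbf{z}^{\alpha}}, \]
while $q_s\mu_{\mathbf{z}^{-1}}(s) = qT_s^{-1} + (q-1)/(1-\mathbf{z}^{-\alpha})$. Using the Hecke relation in the form $qT_s^{-1} = T_s - (q-1)$ together with $(1-\mathbf{z}^{-\alpha})^{-1} = -\mathbf{z}^{\alpha}(1-\mathbf{z}^{\alpha})^{-1}$, a one-line identity shows these are equal.

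For the inductive step, choose a simple reflection $s$ with $l(sw) = l(w)-1$ and set $w' = sw$, so $w = sw'$ and $l(w) = l(w')+1$. The concatenation formula gives
\[ \mu_{\mathbf{z}}(w) = \mu_{\mathbf{z}}(w')\,\mu_{w'\mathbf{z}}(s). \]
Applying the bar (a ring automorphism), then the inductive hypothesis to the first factor and the base case to the second (the base case being valid for any torus parameter, here $w'\mathbf{z}$), we obtain
\[ \overline{\mu_{\mathbf{z}}(w)} = q_{w'}\mu_{\mathbf{z}^{-1}}(w')\cdot q\,\mu_{(w'\mathbf{z})^{-1}}(s) = q_w\,\mu_{\mathbf{z}^{-1}}(w')\,\mu_{w'(\mathbf{z}^{-1})}(s), \]
where I used that the $W$-action on $\hat{T}(\mathbb{C})$ is by group automorphisms, so $(w'\mathbf{z})^{-1} = w'(\mathbf{z}^{-1})$. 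Reapplying the concatenation formula, this time with $\mathbf{z}$ replaced by $\mathbf{z}^{-1}$, repackages the product as $q_w\,\mu_{\mathbf{z}^{-1}}(sw') = q_w\,\mu_{\mathbf{z}^{-1}}(w)$, which is what we want.

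The only real obstacle is the base case: one must confirm that the bar-image of the rational coefficient $(1-q^{-1})/(1-\mathbf{z}^{\alpha})$ matches the coefficient at $\mathbf{z}^{-1}$ after the Hecke relation is used to rewrite $T_s^{-1}$ in terms of $T_s$. Once that is verified, the induction is purely formal, driven by the multiplicativity of both $\mu$ and the bar operation.
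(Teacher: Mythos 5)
Your proof is correct, and it fills in exactly what the paper leaves implicit. The paper's one-line proof ("This reduces to the case where $w$ is a simple reflection, and this case is easily checked from the definition") is the same strategy: the reduction to $l(w)=1$ uses the multiplicativity $\mu_{\mathbf{z}}(w_1w_2)=\mu_{\mathbf{z}}(w_2)\mu_{w_2\mathbf{z}}(w_1)$ together with the bar map being a ring automorphism fixing $\mathbf{z}$, and the simple-reflection case is the algebraic identity you verify via $qT_s^{-1}=T_s-(q-1)$ and $(1-\mathbf{z}^{-\alpha})^{-1}=-\mathbf{z}^{\alpha}(1-\mathbf{z}^{\alpha})^{-1}$. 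The small point you rightly flag, that $(w'\mathbf{z})^{-1}=w'(\mathbf{z}^{-1})$ because $W$ acts on the torus by group automorphisms, is exactly what makes the induction close up.
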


\begin{proof}
  This reduces to the case where $w$ is a simple reflection, and this case is
  easily checked from the definition.
\end{proof}

\begin{proposition}
  We have
  \begin{equation}
    \label{qmrelation} \sum_{u \leqslant w \leqslant t \leqslant v}
    \overline{Q_{u, w}} \varepsilon_w \varepsilon_t m_{t, v} (\mathbf{z}^{-
    1}) = q_u q_v^{- 1} \sum_{u \leqslant y \leqslant v} Q_{u, y} \overline{}
    r_{y, v} (\mathbf{z})
  \end{equation}
\end{proposition}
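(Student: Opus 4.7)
My plan is to pass to the Hecke algebra, collapse the left side by Möbius inversion, then reduce the remaining statement to a purely algebraic identity on $Q$- and $R$-polynomials that can be extracted from Kazhdan--Lusztig theory.

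The first step is to substitute $m_{t,v}(\mathbf{z}^{-1}) = \Lambda(\psi_t\,\mu_{\mathbf{z}^{-1}}(v))$ and exchange the order of summation to do the $t$-sum first. By Verma's theorem the Möbius identity $\sum_{w\leqslant t\leqslant s}\varepsilon_t = \varepsilon_s\,\delta_{w,s}$ holds, so $\sum_{t\geqslant w}\varepsilon_t\,\psi_t = \varepsilon_w T_w$ in $\mathcal{H}$. Combining this with (\ref{ruvdef}) and Lemma~\ref{omegadelta}, the left side collapses to
\[
 \Lambda\Bigl(\Bigl(\sum_{w\geqslant u}\overline{Q_{u,w}}\,T_w\Bigr)\mu_{\mathbf{z}^{-1}}(v)\Bigr) \;=\; \sum_{u\leqslant w\leqslant v}\overline{Q_{u,w}}\,\overline{r_{w,v}}(\mathbf{z}^{-1}).
\]

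The second step is to get a formula for $\overline{r_{y,v}}(\mathbf{z}^{-1})$ in terms of $r_{u,v}(\mathbf{z})$. Applying the bar involution to (\ref{ruvdef}) gives $\overline{\mu_{\mathbf{z}}(v)} = \sum_{u\leqslant v} q_u\,r_{u,v}(\mathbf{z})\,T_u^{-1}$, and inserting the standard expansion $T_u^{-1} = \varepsilon_u q_u^{-1}\sum_{y\leqslant u}\varepsilon_y R_{y,u}(q)\,T_{y^{-1}}$ (which comes from $\overline{T_u}=T_{u^{-1}}^{-1}$ together with the symmetry $R_{z,u^{-1}}=R_{z^{-1},u}$) expresses $\overline{\mu_{\mathbf{z}}(v)}$ in the basis $\{T_{y^{-1}}\}$. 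On the other hand, Lemma~\ref{muzinvlemma} together with (\ref{ruvdef}) at $\mathbf{z}^{-1}$ gives $\overline{\mu_{\mathbf{z}}(v)} = q_v\sum_{y\leqslant v} q_y^{-1}\,\overline{r_{y,v}}(\mathbf{z}^{-1})\,T_{y^{-1}}$. Equating the coefficients of $T_{y^{-1}}$ in these two expansions yields
\[
 \overline{r_{y,v}}(\mathbf{z}^{-1}) \;=\; q_v^{-1} q_y \sum_{y\leqslant u\leqslant v}\varepsilon_u\varepsilon_y R_{y,u}(q)\, r_{u,v}(\mathbf{z}).
\]
Plugging this back into the reduced left side and matching with the right side $q_u q_v^{-1}\sum_y Q_{u,y}\, r_{y,v}(\mathbf{z})$, the proposition becomes equivalent to the purely algebraic identity
\[
 \sum_{u\leqslant w\leqslant y} Q_{u,w}(q)\,R_{w,y}(q) \;=\; q_u^{-1} q_y\,\overline{Q_{u,y}} \qquad (\ast)
\]
for every $u\leqslant y$ in $W$.

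The principal obstacle is the proof of $(\ast)$. I would start from Kazhdan--Lusztig's defining bar-invariance identity
\[
 q_x^{-1} q_y\,\overline{P_{x,y}} \;=\; \sum_{x\leqslant z\leqslant y} R_{x,z}(q)\,P_{z,y}(q),
\]
then substitute $(x,z,y)\mapsto(w_0 y',w_0 w',w_0 u')$, which reverses the Bruhat interval to $u'\leqslant w'\leqslant y'$. Using $P_{w_0 b,w_0 a}=Q_{a,b}$, the identities $q_{w_0 a}=q_{w_0}q_a^{-1}$, and the classical $w_0$-symmetry $R_{w_0 v,w_0 u}(q)=R_{u,v}(q)$ of the R-polynomials, the displayed equation transforms into $(\ast)$. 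The $w_0$-symmetry is the only nontrivial external ingredient; it follows by induction on $l(v)-l(u)$ from the R-polynomial recursion, interchanging a left descent of $v$ with the corresponding right descent of $w_0 u$. Everything else in the argument is mechanical manipulation using the definitions and the lemmas already in hand.
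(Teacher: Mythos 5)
Your proof is correct and follows essentially the same path as the paper's: both reduce the left side via M\"obius inversion (Verma's theorem) and Lemma~\ref{omegadelta}, both derive the identity $\overline{r_{y,v}}(\mathbf{z}^{-1}) = q_v^{-1} q_y \sum_{y\leqslant x\leqslant v}\varepsilon_x\varepsilon_y R_{y,x}\,r_{x,v}(\mathbf{z})$ from Lemma~\ref{muzinvlemma} and KL~(2.0.a), and both finish with the $Q$--$R$ identity coming from KL~(2.2.a). The only cosmetic difference is that you collapse the $t$-sum at the outset rather than at the end, and you spell out the $w_0$-substitution (including the $R$-polynomial symmetry $R_{w_0 v, w_0 u}=R_{u,v}$) that the paper leaves implicit in its citation of (2.2.a).
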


\begin{proof}
  Using Lemma~\ref{muzinvlemma},
  \[ q_v \sum_{u \leqslant v} q_u^{- 1} \overline{r_{u, v}} (\mathbf{z}^{-
     1}) T_{u^{- 1}} = q_v {\mu}_{\mathbf{z}^{- 1}} (v) =
     \overline{{\mu}_{\mathbf{z}} (v)} = \sum_{y \leqslant v} q_y r_{y,
     v} (\mathbf{z}) T_y^{- 1} . \]
  Now $T_y^{- 1} = \sum_{u \leqslant v} \overline{R_{u, y}} q_u^{- 1} T_{u^{-
  1}}$ by {\cite{KazhdanLusztig}} (2.0.a). Substituting this on the right-hand
  side, and comparing the coefficients of $T_{u^{- 1}}$ gives
  \begin{equation}
    \hspace{1em} \overline{r_{u, v}} (\mathbf{z}^{- 1}) = q_v^{- 1} \sum_{u
    \leqslant y \leqslant v} q_y \overline{R_{u, y}} r_{y, v} (\mathbf{z}) .
  \end{equation}
  Then by Theorem~\ref{randmtheorem},
  \[ m_{u, v} (\mathbf{z}^{- 1}) = \sum_{u \leqslant x \leqslant v}
     \overline{r_{x, v}} (\mathbf{z}^{- 1}) = \sum_{u \leqslant x \leqslant
     y \leqslant v} q_v^{- 1} q_y \overline{R_{x, y}} r_{y, v} (\mathbf{z})
     . \]
  By Verma's theorem ({\cite{Verma}}, {\cite{Stembridge}})
  \[ \sum_{u \leqslant t \leqslant v} \varepsilon_u \varepsilon_t m_{t, v}
     (\mathbf{z}^{- 1}) = \sum_{u \leqslant t \leqslant x \leqslant y
     \leqslant v} \varepsilon_u \varepsilon_t q_v^{- 1} q_y \overline{R_{x,
     y}} r_{y, v} (\mathbf{z}) = \sum_{u \leqslant y \leqslant v} q_v^{- 1}
     q_y \overline{R_{u, y}} r_{y, v} (\mathbf{z}) . \]
  Thus
  \[ \sum_{u \leqslant w \leqslant t \leqslant v} \overline{Q_{u, w}}
     \varepsilon_w \varepsilon_t m_{t, v} (\mathbf{z}^{- 1}) = \sum_{u
     \leqslant w \leqslant y \leqslant v} \overline{Q_{u, w}} q_v^{- 1} q_y
     \overline{R_{w, y}} r_{y, v} (\mathbf{z}) . \]
  \[ \  \]
  Now we require the identity
  \[ Q_{u, y} = q_u^{- 1} q_y \sum_{u \leqslant w \leqslant y} \overline{Q_{u,
     w}} \cdot \overline{R_{w, y}}, \]
  which may be deduced from {\cite{KazhdanLusztig}} (2.2.a). Applying this
  gives (\ref{qmrelation}).
\end{proof}

\begin{proof}[Proof of Theorem~\ref{qonefe}]
  Since $Q_{u, v} = 1$ we have $Q_{u, x} = 1$ for all $x \in [u, v]$. Thus
  (\ref{qmrelation}) reads:
  \[ \sum_{u \leqslant s \leqslant t \leqslant v} \varepsilon_s \varepsilon_t
     m_{t, v} (\mathbf{z}^{- 1}) = q_u q_v^{- 1} \sum_{u \leqslant y
     \leqslant v} \overline{} r_{y, v} (\mathbf{z}) = q_u q_v^{- 1}
     \overline{m_{u, v}} . \]
  The result follows from Verma's theorem.
\end{proof}

Let $c_{u,v}$ be as in (\ref{cuvdefinition}).

\begin{proof}[Proof of Theorem~\ref{fullerinversionthm}]
  Using (\ref{qmrelation}), write
  \[ q_v \sum_{x \leqslant y \leqslant z \leqslant t \leqslant v}
     \varepsilon_x \varepsilon_y q_y^{- 1} P_{x, y} \overline{Q_{y, z}}
     \varepsilon_z \varepsilon_t m_{t, v} (\mathbf{z}^{- 1}) = \sum_{x
     \leqslant y \leqslant t \leqslant v} \varepsilon_x \varepsilon_y P_{x, y}
     Q_{y, t} \overline{} r_{t, v} (\mathbf{z}) . \]
  Using the inversion formula Theorem~3.1 of {\cite{KazhdanLusztig}}, the
  right-hand side is just $r_{x, v} (\mathbf{z})$. Now summing over $x$ in
  $[u, v]$ and using (\ref{mfromr}) gives (\ref{fullerinversion}).
\end{proof}

In preparation for proving Theorem~\ref{mmduality},
define $r'_{u, v}$ to be the inverse of $(r_{u, v})$ regarded as a matrix on
$| W |$. Thus
\[ \sum_{u \leqslant x \leqslant v} r_{u, x} r'_{x, v} = \sum_{u \leqslant x
   \leqslant v} r_{u, x}' r_{x, v} = \delta_{u, v} . \]
Then using Verma's theorem it is easy to see that
\[ m_{u, v}' = \sum_{u \leqslant x \leqslant v} \varepsilon_x \varepsilon_v
   \overline{r'_{u, x}}, \qquad r_{u, v}' = \sum_{u \leqslant x \leqslant v}
   \overline{m'_{u, x}} . \]
The coefficient $r'_{u,v}(\mathbf{z})$ specializes to
$\varepsilon_{u}\,\varepsilon_{v}\,R(u,v)$ as
$\mathbf{z}\to\infty$. This is clear from
\cite{KazhdanLusztig}~Lemma~2.1(ii). Nevertheless, we are not aware
of any simple relationship between the coefficients $r$
and $r'$.

The coefficients $r'_{u, v}$ satisfy a recursion similar to
Theorem~\ref{rrecursion}.

\begin{proposition}
  \label{altrecursion}Suppose that $su > u$. If $v < sv$:
  \begin{equation}
    \label{rirecone} r'_{u, v} = r'_{su, sv} + \frac{q - 1}{1 -
    \mathbf{z}^{u^{- 1} \alpha}} r'_{su, v}
  \end{equation}
  If $u < su$ and $v > sv$:
  \begin{equation}
    \label{rirectwo} r'_{u, v} = \frac{(q - 1) \mathbf{z}^{u^{- 1} \alpha}}{1
    - \mathbf{z}^{u^{- 1} \alpha}} r'_{su, v} + qr'_{su, sv}
  \end{equation}
\end{proposition}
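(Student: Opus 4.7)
The plan is to mirror the proof of Theorem~\ref{rrecursion}, but applied to the dual expansion
\[ T_{v^{-1}} \;=\; q_v\sum_{u\leq v}\overline{r'_{u,v}}\,\mu_\mathbf{z}(u), \]
obtained by inverting the definition (\ref{ruvdef}) of $r_{u,v}$.

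First I would compute the product $\mu_\mathbf{z}(u)T_s^{\pm 1}$ in the $\mu$-basis. Using the identity $T_s^{-1} = \mu_{u\mathbf{z}}(s) - (1-q^{-1})/(1-\mathbf{z}^{u^{-1}\alpha})$ together with Lemma~\ref{mumuela}, which gives $\mu_\mathbf{z}(u)\mu_{u\mathbf{z}}(s) = c_u\mu_\mathbf{z}(su)$ with $c_u=1$ when $su>u$ and $c_u$ the Lemma's constant otherwise, one obtains
\[ \mu_\mathbf{z}(u)T_s^{-1} \;=\; c_u\mu_\mathbf{z}(su) - \frac{1-q^{-1}}{1-\mathbf{z}^{u^{-1}\alpha}}\mu_\mathbf{z}(u), \]
\[ \mu_\mathbf{z}(u)T_s \;=\; qc_u\mu_\mathbf{z}(su) - (q-1)\frac{\mathbf{z}^{u^{-1}\alpha}}{1-\mathbf{z}^{u^{-1}\alpha}}\mu_\mathbf{z}(u). \]

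Next I would exploit the Hecke-algebra identities relating $T_{v^{-1}}$ and $T_{(sv)^{-1}}$ via right multiplication by $T_s^{\pm 1}$. Specifically, $T_{(sv)^{-1}} = T_{v^{-1}}T_s$ when $sv>v$ (equivalently $T_{v^{-1}} = T_{(sv)^{-1}}T_s^{-1}$), and $T_{(sv)^{-1}} = T_{v^{-1}}T_s^{-1}$ when $sv<v$ (equivalently $T_{v^{-1}} = T_{(sv)^{-1}}T_s$). Substituting the dual expansion on both sides of each identity and relabeling the dummy index $u\to su$ in the term arising from $\mu_\mathbf{z}(su)$, then comparing coefficients of $\mu_\mathbf{z}(u)$, yields a pair of recursions in each case: one expressing $\overline{r'_{u,v}}$ in terms of $\overline{r'_{su,sv}}$ and $\overline{r'_{u,sv}}$, and a dual one expressing $\overline{r'_{u,sv}}$ in terms of $\overline{r'_{su,v}}$ and $\overline{r'_{u,v}}$.

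The final step is to substitute the second recursion into the first, eliminating the unwanted term $\overline{r'_{u,sv}}$. Under the hypothesis $su>u$, the Lemma~\ref{mumuela} constant takes the form $c_{su} = \frac{(1-q\mathbf{z}^{u^{-1}\alpha})(1-q^{-1}\mathbf{z}^{u^{-1}\alpha})}{q(1-\mathbf{z}^{u^{-1}\alpha})^2}$, while the coefficient of $\overline{r'_{u,v}}$ produced on the left-hand side of the combined equation equals $\frac{(1-q\mathbf{z}^{u^{-1}\alpha})(1-q^{-1}\mathbf{z}^{u^{-1}\alpha})}{(1-\mathbf{z}^{u^{-1}\alpha})^2}=qc_{su}$, via the algebraic identity $(1-x)^2 - (q-1)(1-q^{-1})x = (1-qx)(1-q^{-1}x)$ with $x=\mathbf{z}^{u^{-1}\alpha}$. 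These factors of $c_{su}$ cancel precisely, leaving the clean coefficient $1$ or $q^{-1}$ in front of $\overline{r'_{su,sv}}$ (according to whether $v<sv$ or $v>sv$) and the coefficient $\pm\frac{1-q^{-1}}{1-\mathbf{z}^{u^{-1}\alpha}}$ in front of $\overline{r'_{su,v}}$. Applying the bar involution then produces the stated formulas. The principal obstacle is precisely this final algebraic bookkeeping: verifying the factorisation and checking that the Lemma~\ref{mumuela} constant dovetails exactly with the rational factor produced by eliminating $\overline{r'_{u,sv}}$ so that only the clean coefficients of the proposition survive.
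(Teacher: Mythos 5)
Your proposal is correct but takes a more roundabout route than the paper. You and the paper both start from the dual expansion $T_{v^{-1}} = q_v\sum_{w\leq v}\overline{r'_{w,v}}\,\mu_{\mathbf z}(w)$, both exploit $T_{(sv)^{-1}} = T_{v^{-1}}T_s^{\pm 1}$, and both use Lemma~\ref{mumuela} to push $\mu_{\mathbf z}(w)T_s^{\pm 1}$ back into the $\mu$-basis. The divergence is in which coefficient you extract. You compare coefficients of $\mu_{\mathbf z}(u)$ (with $su>u$); the two contributions then come from $w=u$ (via the scalar summand) and from $w=su$ (via $\mu_{\mathbf z}(su)\mu_{su\mathbf z}(s)=c_{su}\mu_{\mathbf z}(u)$), so the nontrivial constant $c_{su}$ of Lemma~\ref{mumuela} enters the computation and the resulting relation involves the unwanted term $\overline{r'_{u,sv}}$, forcing you to pair off the two Hecke identities and eliminate. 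The paper instead compares coefficients of $\mu_{\mathbf z}(su)$ in a single identity; then the relevant contributions come from $w=u$ via $\mu_{\mathbf z}(u)\mu_{u\mathbf z}(s)=\mu_{\mathbf z}(su)$ (where the constant is $1$ because $su>u$) and from $w=su$ via the scalar summand, so $c_{su}$ never appears, $\overline{r'_{u,sv}}$ never appears, and the recursion drops out immediately after one application of the bar involution. Your algebraic bookkeeping does check out — in particular the factorisation $(1-x)^2-(q-1)(1-q^{-1})x=(1-qx)(1-q^{-1}x)$ and the identity $qc_{su}=(1-qx)(1-q^{-1}x)/(1-x)^2$ with $x=\mathbf z^{u^{-1}\alpha}$ are correct and the $c_{su}$ factors cancel as you say — but the entire elimination step, which you flag as the principal obstacle, evaporates if you extract the coefficient of $\mu_{\mathbf z}(su)$ rather than $\mu_{\mathbf z}(u)$.
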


Note that $s u > u$ implies that $u^{- 1} \alpha$ is a positive root.

\begin{proof}
  Since $(r_{u, v}')$ is the inverse matrix of $(r_{u, v})$ we have
  \begin{equation}
    \label{mutvrel} T_{v^{- 1}} = \sum_{u \leqslant v} q_v \overline{r_{w,
    v}'} (\mathbf{z}) \mu_{\mathbf{z}} (w) .
  \end{equation}

  First let us consider the case $v > s v$. Then $T_{(s v)^{- 1}} = T_{v^{-
  1}} T_s^{- 1}$. Moreover for any $w \in W$, we may write $T_s^{- 1}$ as a
  linear combination of $\mu_{w\mathbf{z}} (s)$ and $1$ to obtain
  \begin{equation}
    \label{rdway} T_{(s v)^{- 1}} = \sum_{w \leqslant v} \left( q_v
    \overline{r_{w, v}'} (\mathbf{z}) \mu_{\mathbf{z}} (w) \right) \left(
    \mu_{w\mathbf{z}} (s) - \frac{1 - q^{- 1}}{1 -\mathbf{z}^{w^{- 1}
    \alpha}} \right) .
  \end{equation}
  Then we may use Lemma~\ref{mumuela} to compare the coefficients of
  $\mu_{\mathbf{z}} (s u)$ in this equation and in (\ref{mutvrel}) applied
  to $s v$. In (\ref{rdway}) there are two ways to get a coefficient of $s u$:
  we may either take $w = u$ or $w = s u$. We obtain
  \[ q_{s v} \overline{r'_{s u, s v}} = q_v \overline{r'_{u, v}} -_{}
     \overline{r'_{s u, v}} q_v \cdot \frac{1 - q^{- 1}}{1 -\mathbf{z}^{-
     u^{- 1} \alpha}} . \]
  Applying the involution and rearranging gives (\ref{rirectwo}).
 
  Now let $s v > v$. Then $T_{(s v)^{- 1}} = T_{v^{- 1}} T_s$. We may proceed
  as before except that now it is $T_s$ that we are expressing as a linear
  combination of $\mu_{w\mathbf{z}} (s)$ and $1$. We obtain
  \[ T_{(s v)^{- 1}} = \sum_{w \leqslant v} \left( q_v \overline{r_{w, v}'}
     (\mathbf{z}) \mu_{\mathbf{z}} (w) \right) \left( q
     \mu_{w\mathbf{z}} (s) - \frac{(q - 1) \mathbf{z}^{w^{- 1} \alpha}}{1
     -\mathbf{z}^{w^{- 1} \alpha}} \right) . \]
  Now comparing the coefficient of $\mu_{\mathbf{z}} (s w)$ gives the
  identity
  \[ q_{s v} \overline{r_{s u, s v}'} = q q_v \overline{r'_{u, v}} - q_v
     \overline{r'_{s u, v}} \frac{(q - 1) \mathbf{z}^{- u^{- 1} \alpha}}{1
     -\mathbf{z}^{- u^{- 1} \alpha}} . \]
  Applying the involution and rearranging gives (\ref{rirecone}).
\end{proof}

\begin{proof}[Proof of Theorem~\ref{mmduality}]
  It is sufficient to prove that defining
  \begin{equation}
    \label{rprimeid}
    r'(u,v)=\varepsilon(u)\varepsilon(v)r_{w_0v,w_0u}
  \end{equation}
  makes the recursion of Proposition~\ref{altrecursion}
  true. Since $w\mapsto w_0w$ is a Bruhat-order
  reversing bijection of the Weyl group to intself,
  we may apply Theorem~\ref{rrecursion} with
  $u$, $v$ and $s$ being replaced by $w_0v$,
  $w_0u$ and $w_0sw_0$. With this substitution
  it is easy to see that the definition (\ref{rprimeid})
  makes the recursion (\ref{altrecursion}) true,
  so this definition must agree with our original
  one that makes of $(r'_{u,v})$ being the inverse matrix
  of the matrix $r_{u,v}$. This is equivalent to
  (\ref{first_mm}). To obtain (\ref{second_mm}), we
  substitute equations (\ref{mfromr}) for to
  express $m_{u,v}$ and $m_{w_0v,w_0u}$ into
  the left hand side and use (\ref{first_mm}).
\end{proof}
  
\section{\label{despro}Descent properties of $m_{u,v}$}

Although we will not prove the conjectured formula (\ref{conjecture})
we now have tools to prove it in many cases.

\begin{proposition}
  \label{propertyz}Let $u, v \in W$ and assume that $s$ is a simple reflection
  such that $s u < u$ and $s v < v$. Then the following are equivalent:
  
  (i) $u \leqslant v$,
  
  (ii) $s u \leqslant v$,
  
  (iii) $s u \leqslant s v$.
\end{proposition}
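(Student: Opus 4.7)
The plan is to prove the three equivalences via the lifting property of the Bruhat order (as in, for example, Bj\"orner--Brenti, Proposition~2.2.7): if $x\leqslant y$ in $W$ and $s\in D_L(y)$ (i.e.\ $sy<y$), then $sx\leqslant y$ and $x\leqslant sy$.

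First I would dispose of the two trivial directions by transitivity together with the standing hypotheses $su<u$ and $sv<v$: the chain $su\leqslant u\leqslant v$ gives $(i)\Rightarrow(ii)$, and the chain $su\leqslant sv\leqslant v$ gives $(iii)\Rightarrow(ii)$. This reduces the proposition to deriving both $(i)$ and $(iii)$ from $(ii)$.

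So assume $(ii)$, i.e.\ $su\leqslant v$. If $su=v$ then $u=s(su)=sv<v$, so $(i)$ holds and $(iii)$ becomes the trivial $sv\leqslant sv$. Otherwise $su<v$ strictly, and I apply the lifting property to the pair $(su,v)$: since $sv<v$ we have $s\in D_L(v)$, and the lifting property immediately yields $s(su)=u\leqslant v$, which is $(i)$, and $su\leqslant sv$, which is $(iii)$.

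The main (minor) obstacle is simply having a reference for, or in-line proof of, the lifting property. A self-contained alternative which I would present if no reference is wanted is via the strong subword characterization: choose a reduced expression $v=s\cdot s_{i_2}\cdots s_{i_k}$ with $sv=s_{i_2}\cdots s_{i_k}$ reduced, and note that any reduced subword of this expression whose product equals $su$ cannot begin with the initial $s$, because the first letter of a reduced expression is always a left descent of its product and $s\notin D_L(su)$ (since $s(su)=u>su$). Hence such a subword lies inside $s_{i_2}\cdots s_{i_k}$, giving $su\leqslant sv$; conversely, prepending $s$ to this subword yields a reduced expression for $u$ appearing as a subword of $v$, giving $u\leqslant v$. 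This direct argument simultaneously establishes both $(ii)\Rightarrow(i)$ and $(ii)\Rightarrow(iii)$.
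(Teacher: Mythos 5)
Your proof is correct in substance and, at the level of the main route, essentially the same as the paper's: the paper's entire ``proof'' is a citation to Deodhar's Property~Z, a.k.a.\ the lifting property, with a pointer to Bj\"orner--Brenti Proposition~2.2.7, and that is also the lemma you lean on. You fill in the (short) derivation of the stated equivalences from the lifting property, plus the trivial directions; that is a reasonable amount of extra detail but not a different method.

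Two small corrections to your lifting-property route. First, your paraphrase of the lifting property is too strong as written: ``$x\leqslant y$ and $sy<y$ imply $sx\leqslant y$ \emph{and} $x\leqslant sy$'' is false in general (take $x=y$, or $x=s_1s_2$, $y=s_1s_2s_1$, $s=s_1$ in $S_3$). The conclusion $x\leqslant sy$ requires the additional hypothesis $s\notin D_L(x)$ (equivalently $sx>x$). In your application $x=su$ and $s(su)=u>su$, so the needed hypothesis does hold, but the lemma should be stated with it. Second, the case $su=v$ cannot occur: $su<u$ would then read $v<u$, while $sv<v$ reads $u<v$, a contradiction; so that paragraph can be dropped entirely. (As written, the claim that ``(iii) becomes the trivial $sv\leqslant sv$'' in that case is also garbled --- with $su=v$, (iii) would read $v\leqslant u$.)

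Your self-contained subword argument is correct and is the one genuinely different contribution: it proves both $(ii)\Rightarrow(iii)$ and $(ii)\Rightarrow(i)$ directly from the subword characterization of Bruhat order, using only that the first letter of a reduced word is a left descent of its product, without invoking the lifting property as a black box. Given that Property~Z and the lifting property are essentially the same lemma, that subword argument is the one place where you go beyond what the paper (by citation) does, and it avoids any risk of circularity in citing a statement equivalent to the one being proved.
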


\begin{proof}
  This is Property Z in Deodhar~{\cite{DeodharCharacterizations}}. It is
  sometimes called the \textit{lifting property} of the Bruhat order. See
  \cite{BjornerBrenti} Proposition~2.2.7 for a proof.
\end{proof}

The next result allows computation of $m_{u,v}$ from $m_{u,sv}$ if
a simple reflection $s$ may be found such that $sv<v$ and $su>u$.
If this is true, the map $x\mapsto s x$ is a \textit{special matching}
in the sense of Brenti~\cite{BrentiIntersection} and the reduction is
reminiscent of the proof in certain cases that that the Kazhdan-Lusztig
polynomials are combinatorial invariants of the Bruhat interval poset.  See
See~\cite{BrentiCaselliMarietti}.

\begin{proposition}
  \label{firstdescent}
  Let $u<v$ and let $s=s_\alpha$ be a simple reflection
  such that $s v < v$ and $u < s u$. Then
  \[S (u, v) = S (u, s v) \cup \{ -  v^{- 1} \alpha \}\qquad
  \text{(disjoint)},\]
  and
  \begin{equation}
    \label{mdescent} \overline{m_{u, v}} = \left( \frac{1 - q\mathbf{z}^{-
    v^{- 1} \alpha}}{1 -\mathbf{z}^{- v^{- 1} \alpha}} \right) 
    \overline{m_{u, s v}} .
  \end{equation}
\end{proposition}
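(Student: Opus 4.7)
The plan is to prove the proposition in two stages: first establishing the set-theoretic decomposition $S(u,v) = S(u,sv) \sqcup \{-v^{-1}\alpha\}$, and then using the recursion of Theorem~\ref{rrecursion} to deduce the $m$-identity via a careful pairing on the Bruhat interval $[u,v]$.

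For the first stage I would observe that $vr_{-v^{-1}\alpha} = sv$ (using $r_{v^{-1}\alpha} = v^{-1}sv$), and that Proposition~\ref{propertyz} (the lifting property) applied to $u \leq v$, $sv < v$, $u < su$ gives $u \leq sv$; thus $-v^{-1}\alpha \in S(u,v)$, while evidently $-v^{-1}\alpha \notin S(u,sv)$ because $svr_{-v^{-1}\alpha} = v > sv$. For any $\alpha' \in \Phi^+$ with $\alpha' \neq -v^{-1}\alpha$, the signs of $v\alpha'$ and $sv\alpha'$ agree (only $\alpha' = -v^{-1}\alpha$ satisfies $v\alpha' = -\alpha$), so the length condition $vr_{\alpha'} < v$ is equivalent to $svr_{\alpha'} < sv$; and applying Property Z to the Bruhat comparison $u \leq vr_{\alpha'}$ in both subcases $s(vr_{\alpha'}) \gtrless vr_{\alpha'}$ (using $su > u$) yields $u \leq vr_{\alpha'} \iff u \leq svr_{\alpha'}$.

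For the second stage, I would partition $[u,v]$ using the special matching $x \mapsto sx$. Set $A = \{x \in [u,v] : sx > x\}$. Because $sv < v$ and $su > u$, the lifting property implies both $sx \leq v$ for $x \in A$ and, for $y \in [u,v]$ with $sy < y$, $sy \geq u$; hence $x \mapsto sx$ is a fixed-point-free involution of $[u,v]$, giving $[u,v] = \bigsqcup_{x \in A}\{x, sx\}$. The same argument shows $A \subseteq [u,sv]$ and $[u,sv] = A \sqcup \{sx : x \in A,\ sx \leq sv\}$. Now applying both branches of the recursion in Theorem~\ref{rrecursion} with $\zeta = \mathbf{z}^{-v^{-1}\alpha}$ and taking the bar, one finds by an elementary fraction calculation that
\[\overline{r_{x,v}} + \overline{r_{sx,v}} = \frac{1-q^{-1}\zeta}{1-\zeta}\bigl(\overline{r_{x,sv}} + \overline{r_{sx,sv}}\bigr)\]
for each $x \in A$ (both coefficients collapse to the same factor). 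Summing over $x \in A$, using the partition of $[u,sv]$ together with the vanishing $\overline{r_{sx,sv}} = 0$ when $sx \not\leq sv$, and invoking formula~\eqref{mfromr} from Theorem~\ref{randmtheorem}, gives $m_{u,v} = \frac{1-q^{-1}\zeta}{1-\zeta}\, m_{u,sv}$. Applying the involution $\overline{\,\cdot\,}$, which fixes $\mathbf{z}$ and sends $q \mapsto q^{-1}$, produces the stated identity~\eqref{mdescent}.

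The main obstacle is the combinatorial setup of the second stage: ensuring that the pairing $x \leftrightarrow sx$ is an involution on the whole of $[u,v]$ and that it restricts compatibly to $[u,sv]$. Both facts hinge on the hypothesis $su > u$ being used through the lifting property at each step; without it, some elements of $[u,v]$ would be unpaired and the algebraic cancellation between the two branches of the recursion would not collapse to the clean Gindikin--Karpelevich-type factor.
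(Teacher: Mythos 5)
Your proof is correct, and your Stage 2 (the special matching $x\mapsto sx$ on $[u,v]$ combined with both branches of the recursion of Theorem~\ref{rrecursion} collapsing to the common factor $\frac{1-q\zeta}{1-\zeta}$) is identical in substance to the paper's, up to taking the bar before rather than after summing.

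The set-theoretic Stage 1 is handled differently, and your version is arguably cleaner. The paper splits the claim ``$\beta\in S(u,v)\Leftrightarrow\beta\in S(u,sv)$'' for $\beta\neq -v^{-1}\alpha$ into two cases according to the sign of $\ell(svr_\beta)-\ell(vr_\beta)$, dispatches one case quickly via Property~Z, and then invokes the Strong Exchange Property for Coxeter groups to deal with the other case (by writing a reduced word $v=s_1\cdots s_N$ with $s_1=s$ and arguing about which letter gets deleted). You instead observe that the ``length half'' of the membership condition ($vr_{\alpha'}<v$, equivalently $v\alpha'\in\Phi^-$) is preserved because $s_\alpha$ flips the sign of exactly $\pm\alpha$, and then the ``Bruhat half'' ($u\leq vr_{\alpha'}$) is preserved by two uniform applications of Property~Z using $su>u$. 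This sidesteps Strong Exchange entirely and treats both directions symmetrically in a single argument. The trade-off is purely one of taste; both establish the needed decomposition, and the algebraic Stage~2 that follows is the same.

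Two very small points: first, you should say explicitly (as you essentially use) that $\alpha'=v^{-1}\alpha$ is excluded automatically because $v^{-1}\alpha\in\Phi^-$ while $\alpha'\in\Phi^+$, so the only root $\alpha'\in\Phi^+$ with $v\alpha'=\pm\alpha$ is $\alpha'=-v^{-1}\alpha$. Second, when you sum over $A$ and appeal to \eqref{mfromr}, it is worth spelling out that the terms $\overline{r_{sx,sv}}$ with $x\in A$, $sx\not\leq sv$ vanish, and that the remaining $sx$ together with $A$ exhaust $[u,sv]$; you gesture at this with ``$[u,sv]=A\sqcup\{sx:x\in A,\ sx\leq sv\}$'' and it is correct, but this is precisely the step where the hypothesis $su>u$ is doing its work, so it deserves the explicit justification.
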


\begin{proof}
  Note that by Proposition~\ref{propertyz} we have $u \leqslant s v$. If
  $\beta = - v^{- 1} \alpha$ then $v r_{\beta} = s v$, so $u \leqslant v
  r_{\beta} < v$ is true but $u \leqslant s v r_{\beta} < s v$ is not, showing
  that $- v^{- 1} \alpha \in S (u, v)$ but not $S (u, s v)$. If $\beta$ is a
  positive root not equal to $- v^{- 1} \alpha$ we must show $\beta \in S (u,
  v)$ if and only if $\beta \in S (u, s v)$. First suppose that $s v r_{\beta}
  < v r_{\beta}$. Then this statement is easily deduced from
  Proposition~\ref{propertyz}. Therefore let us assume that $v r_{\beta} < s v
  r_{\beta}$. If $\beta \in S (u, s v)$ then $u \leqslant s v r_{\beta} < s
  v$. Proposition~\ref{propertyz} implies that $u \leqslant v r_{\beta}$, and
  $v r_{\beta} < s v r_{\beta}$ while again by Proposition~\ref{propertyz}, $s
  v r_{\beta} \leqslant v$. Therefore $\beta \in S (u, v)$. We are left to
  check that if $\beta \in S (u, v)$ but $\beta \notin S (u, s v)$ then $\beta
  = - v^{- 1} \alpha$. To do this, we use the Strong Exchange Property for
  Coxeter groups, which is Theorem~5.8 in {\cite{Humphreys}}. Write $v = s_1
  \cdots s_N$ where the $s_i$ are simple reflections, and the expression is
  reduced. Since $s$ is a left descent we may assume that $s_1 = s$. The
  Strong Exchange Property states that $v r_{\beta} = s_1 \cdots \widehat{s_i}
  \cdots s_N$ for some $i$. Suppose that $i \neq 1$. Then $s v r_{\beta} = s_2
  \cdots \widehat{s_i} \cdots s_N < s_2 \cdots s_N = s v$, while by
  Proposition~\ref{propertyz} we have $u \leqslant s v r_{\beta}$. This
  contradicts our assumption that $\beta \notin S (u, s v)$. Therefore $i = 1$
  which implies that $\beta = s_N s_{N - 1} \cdots s_2 (\alpha) = v^{- 1} (-
  \alpha)$.
  
  We turn to (\ref{mdescent}). Using Proposition~\ref{propertyz}, the fact
  that $s v < v$ and $s u < u$ implies that $u \leqslant x \leqslant v$ if and
  only if $u \leqslant s x \leqslant v$. Therefore
  \[ \overline{m_{u, v}} = \sum_{u \leqslant x \leqslant v} r_{x, v} =
     \sum_{\substack{u \leqslant x \leqslant v\\ s x < x}}(r_{x, v} + r_{s x, v}) . \]
  We may now use both cases of Theorem~\ref{rrecursion} to rewrite this. The
  first case of the recursion applies to $r_{x, v}$, and the second applies to
  $r_{s x, v}$. We have
  \[ r_{x, v} + r_{s x, v} = \frac{1 - q}{1 - \mathbf{z}^{- v^{- 1} \alpha}}
     r_{x, sv} (\mathbf{z}) + r_{sx, sv} (\mathbf{z}) + (1 - q) 
     \frac{\mathbf{z}^{- v^{- 1} \alpha}}{1 - \mathbf{z}^{- v^{- 1} \alpha}}
     r_{s x, sv} (\mathbf{z}) + qr_{x, sv} (\mathbf{z}) . \]
  Simplifying, we get
  \[ r_{x, v} + r_{s x, v} = \left( \frac{1 - q\mathbf{z}^{- v^{- 1}
     \alpha}}{1 -\mathbf{z}^{- v^{- 1} \alpha}} \right) (r_{x, s v} + r_{s
     x, s v}) . \]
  The term $r_{x, s v}$ can be zero since it is possible that $x$ is not
  $\leqslant s v$, but we always have $s x \leqslant s v$ by Proposition~\ref{propertyz}.
   Discarding $r_{x, s v}$ when $x$ is not $s v$, we get
  \[ \overline{m_{u, v}} = \left( \frac{1 - q\mathbf{z}^{- v^{- 1}
     \alpha}}{1 -\mathbf{z}^{- v^{- 1} \alpha}} \right) 
     \sum_{\substack{u \leqslant x \leqslant v\\ s x < x}}
     (r_{x, s v} + r_{s x, s v}) = \left( \frac{1 -
     q\mathbf{z}^{- v^{- 1} \alpha}}{1 -\mathbf{z}^{- v^{- 1} \alpha}}
     \right)  \sum_{u \leqslant x \leqslant s v} r_{x, s v} \]
  which equals the right-hand side of (\ref{mdescent}).
\end{proof}

\eject
Here is another type of descent result.

\begin{proposition}
  \label{seconddescent}
  Assume that $s v < v$ and $s u < u$. Assume furthermore that
  $u$ is not $\leqslant s v$.
  \begin{enumerate}
  \item[(i)] Then $S (u, v) = S (s u, s v)$.
    \smallskip
  \item[(ii)] The map $x\mapsto s x$ is a bijection of
    the Bruhat interval $[u,v]=\{x|u\leqslant x\leqslant v\}$
    to $[su,sv]$. If $u\leqslant x\leqslant v$, then
    $s x < x$ and $x$ is not $\leqslant sv$, 
    and $S (x, v) = S (s x, s v)$.
    \smallskip
  \item[(iii)] We have
    \[m_{u,v}=m_{su,sv},\qquad r_{u,v}=r_{su,sv}.\]
  \item[(iv)] If furthermore $Q_{u,v}=1$ then $Q_{s u,s v}=1$.
  \end{enumerate}
\end{proposition}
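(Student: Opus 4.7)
I will prove (ii) first and derive (i), (iii), (iv) as consequences, together with Theorem~\ref{rrecursion}. The plan for (ii) is to exhibit the bijection as half of a more symmetric picture: the Bruhat interval $[su,v]$ is closed under the involution $x\mapsto sx$, which bijects
\[
A=\{x\in[su,v]:sx>x\}\quad\text{and}\quad B=\{x\in[su,v]:sx<x\}.
\]
Closure under $s$ is the standard lifting/subword argument already used in the proof of Proposition~\ref{firstdescent} (if $x\leqslant v$ and $sx>x$ then $x\leqslant sv$ by the subword property, and then $sx\leqslant v$). The heart of the matter is to identify $B=[u,v]$ and $A=[su,sv]$; the bijection in (ii) drops out at once.

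For $B=[u,v]$: if $x\in[u,v]$ had $sx>x$, the same subword argument gives $x\leqslant sv$, contradicting $u\leqslant x$ and $u\not\leqslant sv$; so $[u,v]\subseteq B$. Conversely, for $x\in B$ we have $s\notin D_L(su)$ and $s\in D_L(x)$, so $su\leqslant x$ gives $su\leqslant sx$ by a direct subword check, and then Property~Z (Proposition~\ref{propertyz}) applied to $(u,x)$ gives $u\leqslant x$. For $A=[su,sv]$: the nontrivial direction is to exclude $y\in[su,sv]$ with $sy<y$, which would give $su\leqslant sy$ (same lifting) and then $u\leqslant y$ by Property~Z, so $u\leqslant y\leqslant sv$, again contradicting $u\not\leqslant sv$. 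This establishes (ii), including the side assertions $sx<x$ and $x\not\leqslant sv$ for $x\in[u,v]$. Part (i) is then immediate: $\alpha\in S(u,v)$ iff $vr_\alpha\in[u,v)$, which by the bijection is iff $(sv)r_\alpha=s(vr_\alpha)\in[su,sv)$, iff $\alpha\in S(su,sv)$. The pointwise statement $S(x,v)=S(sx,sv)$ follows identically with $u$ replaced by $x$, the hypothesis $x\not\leqslant sv$ being inherited from $u\leqslant x$.

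Part (iii) is a short application of Theorem~\ref{rrecursion}: since $su<u$ and $sv<v$, the recursion reads
\[
r_{u,v}=\frac{1-q}{1-\mathbf{z}^{-v^{-1}\alpha}}\,r_{u,sv}+r_{su,sv},
\]
and $u\not\leqslant sv$ kills $r_{u,sv}$, leaving $r_{u,v}=r_{su,sv}$. Applied to each $(x,v)$ with $x\in[u,v]$ (where again $x\not\leqslant sv$), this gives $r_{x,v}=r_{sx,sv}$; summing through the bijection and using $m_{u,v}=\sum_{x\in[u,v]}\overline{r_{x,v}}$ yields $m_{u,v}=m_{su,sv}$. For (iv), I apply the same vanishing to every pair $(y,x)$ with $y\leqslant x$ both in $[u,v]$. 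The crucial observation is $y\not\leqslant sx$: indeed $sx\leqslant sv$ by (ii) while $y\geqslant u$ and $u\not\leqslant sv$, so $y\not\leqslant sv\geqslant sx$. Hence $r_{y,x}=r_{sy,sx}$ for all such pairs, and by specialization $R_{y,x}=R_{sy,sx}$. Using the symmetry $R_{a,b}=R_{w_0b,w_0a}$, the length-preserving bijection $x\mapsto sx$ transports the full R-polynomial data of the dual interval $[w_0v,w_0u]$ to that of $[w_0sv,w_0su]$. By the uniqueness of Kazhdan--Lusztig polynomials from R-polynomial data (induction on interval length via the KL recursion), $P_{w_0v,w_0u}=P_{w_0sv,w_0su}$, i.e.\ $Q_{u,v}=Q_{su,sv}$, so (iv) follows at once. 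The main obstacle lies in (ii): one must correctly marshal the several lifting-type arguments and pinpoint precisely where $u\not\leqslant sv$ is used, namely to rule out $sy<y$ for $y\in[su,sv]$; once (ii) is in hand, (i) and (iii) are mechanical and (iv) needs only the observation that R-polynomial equality propagates to KL-polynomial equality.
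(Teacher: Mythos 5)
Your proposal is correct, and it takes a genuinely different route from the paper in two places, so a comparison is worthwhile.

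\emph{Ordering of (i) and (ii).} The paper proves (i) first, using the Strong Exchange Property for the containment $S(u,v)\subseteq S(su,sv)$ and Property~Z for the reverse containment, and then proves the bijection statement in (ii) by Property~Z, invoking (i) to obtain $S(x,v)=S(sx,sv)$. You reverse this: you establish the bijection $[u,v]\to[su,sv]$ first, entirely by Property~Z (your decomposition of $[su,v]$ into $A$ and $B$ and the identifications $B=[u,v]$, $A=[su,sv]$ are all correct applications of the lifting property, and the hypothesis $u\not\leqslant sv$ is invoked exactly where it must be, to rule out an $x\in[u,v]$ with $sx>x$ and a $y\in[su,sv]$ with $sy<y$). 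You then deduce (i), and more generally $S(x,v)=S(sx,sv)$, directly from the bijection via the observation that $\alpha\in S(u,v)$ iff $vr_\alpha\in[u,v)$, and the bijection carries $[u,v)$ to $[su,sv)$. This avoids the Strong Exchange Property altogether and is a legitimate streamlining.

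\emph{Part (iv).} The paper uses the Kazhdan--Lusztig Lemma~2.6 criterion: $Q_{x,y}=1$ iff $\sum_{x\leqslant z\leqslant y} R_{z,y}=q_yq_x^{-1}$, combined with $R_{x,v}=R_{sx,sv}$, which gives exactly the conditional implication asked for. You instead invoke the general principle that the KL polynomials on an interval are determined by the poset, the length differences, and the $R$-polynomial data; this yields the stronger conclusion $Q_{u,v}=Q_{su,sv}$. Both are correct; the paper's route is more elementary and self-contained, yours proves more but leans on the (standard, though not stated in the paper) combinatorial-invariance-from-$R$-data principle.

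Two small imprecisions worth flagging: you describe $x\mapsto sx$ as \emph{length-preserving}, which it is not — it subtracts $1$ from every length in $[u,v]$ — but it does preserve length differences, and that is all the KL recursion (and the $q_yq_x^{-1}$ factors) actually uses, so the argument survives. And to obtain $R_{y,x}=R_{sy,sx}$ you specialize $\mathbf{z}\to\infty$ in $r_{y,x}=r_{sy,sx}$; this works, though one can also apply the $R$-polynomial recursion (2.0.b) directly, as the paper does, which is marginally cleaner since it bypasses the limiting argument.

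Part (iii) is essentially identical to the paper's argument.
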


\begin{proof}
  To prove (i), we first show that $S (u, v) \subseteq S (s u, s v)$. Let
  $\beta \in S (u, v)$ and let $r = r_{\beta}$, so that $u \leqslant v
  r_{\beta}$. Let $v = s_1 \cdots s_N$ be a reduced expression with $s_1 = s$.
  Thus $v r = s_1 \cdots \widehat{s_i} \cdots s_N$ for some $i$. We claim that
  $i \neq 1$. Indeed, if $i = 1$ then $v r = s v$ so $u \leqslant v r$ which
  contradicts our assumption. Therefore $i \neq 1$ and $s v r = s_2 \cdots
  \widehat{s_i} \cdots s_N$. Since $s v = s_2 \cdots s_N$ is a reduced
  expression, we see that $s v r < s v$. On the other hand, since $s u < u$,
  Proposition~\ref{propertyz} implies that $s u < s v r$ and therefore $\beta
  \in S (s u, s v)$.
  
  On the other hand, let us show that $S (s u, s v) \subseteq S (u, v)$. Thus
  assume that $r = r_{\beta}$ where $\beta \in S (s u, s v)$ and $s u
  \leqslant s v r < s v$. We claim that $s v r < v r$. Indeed, if not, then $s
  u \leqslant s v r$ implies $u \leqslant s v r$ by
  Proposition~\ref{propertyz} and so $u \leqslant s v r < s v$, contradicting
  our hypothesis. Now since $v r > s v r$, $u > s u$ and $s u \leqslant s v
  r$, Proposition~\ref{propertyz} implies that $u \leqslant v r$. On the other
  hand since $v > s v$ and $s v r < s v$, Proposition~\ref{propertyz} implies
  that $v r < v$. (We cannot have $v r = v$ since $r$ is a reflection.) Thus
  $u \leqslant v r < v$ and so $\beta \in S (u, v)$. Now (i) is proved.
  
  We prove (ii). First, if $x \in [u, v]$, then we claim that $x > s x$.
  Indeed, if $s x > x$ then $x < s v$ by Proposition~\ref{propertyz}. Then $u
  \leqslant x < s v$, contradicting our hypothesis. Now two applications of
  Proposition~\ref{propertyz} show that $s u \leqslant s x$ and $s x \leqslant
  s v$. Thus $x \mapsto s x$ maps $[u, v]$ into $[s u, s v]$. The fact that
  this map is surjective also follows from Proposition~\ref{propertyz}.
  Finally, since we have shown that $x < s x$ for $x \in [u, v]$, part (i)
  applies to the pair $x, v$, implying that $S (x, v) = S (s x, s v)$. Now
  (ii) is proved.
  
  As for (iii), the fact that $r_{u,v}=r_{su,sv}$ follows from Theorem~\ref{rrecursion}
  since $r_{u,sv}=0$ under our assumption that $u$ is not $\leqslant sv$.
  By (ii), we have similarly $r_{x,v}=r_{sx,sv}$ for $x\in[u,v]$.
  Summing over $x$ and applying the involution gives $m_{u,v}=m_{su,sv}$.

  We prove (iv). A criterion for $P_{x, y} = 1$ due to Kazhdan and
  Lusztig~{\cite{KazhdanLusztig}} Lemma 2.6 is that $\sum_{x \leqslant z
  \leqslant y} R_{x, z} = q_y q_x^{- 1}$. (Actually in this Lemma this is the
  condition that $P_{z, y} = 1$ for all $x \leqslant z \leqslant y$, but by a
  result of Carrell and Peterson~\cite{Carrell}, that is equivalent to $P_{x, y} = 1$.)
  By~{\cite{KazhdanLusztig}} Lemma~2.1 (iv) it follows that the criterion for
  $Q_{x, y} = 1$ is that $\sum_{x \leqslant z \leqslant y} R_{z, y} = q_y
  q_x^{- 1}$. Thus $Q_{u, v} = 1$ we have $\sum_{u \leqslant x \leqslant v}
  R_{x, v} = q_v q_u^{- 1}$. Moreover using (ii) and the recurrence
  {\cite{KazhdanLusztig}} (2.0.b) for $R$ we have $R_{x, v} = R_{s x, s v}$
  and it follows that $\sum_{s u \leqslant s x \leqslant s v} R_{s x, s v} =
  q_v q_u^{- 1} = q_{s v} q_{s u}^{- 1}$. Therefore $Q_{s u, s v} = 1$.
\end{proof}

We make the following conjecture.

\begin{conjecture}
  \label{descentconjecture}
  Assume that $\Phi$ is simply-laced and that $u<v$ in $W$ such
  that $Q_{u,v}=1$. Then there exists a simple reflection $s\in W$
  such that either:
  \begin{enumerate}
    \item[(i)] $s v<v$ and $s u> u$; or
    \item[(ii)] $s v<v$ and $s u< u$, and $u$ is not $\leqslant sv$.
  \end{enumerate}
\end{conjecture}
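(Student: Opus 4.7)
The plan is to argue by contradiction. Suppose no simple reflection satisfies (i) or (ii). Since any $s\notin D_L(v)$ trivially fails both conditions, this means: for every $s\in D_L(v)$, one has $su<u$ (so (i) fails) and $u\leqslant sv$ (so (ii) fails); equivalently, $D_L(v)\subseteq D_L(u)$ and $u\leqslant sv$ for every $s\in D_L(v)$. The strategy is to derive a contradiction with $Q_{u,v}=1$, which forces $|S(u,v)|=l(v)-l(u)$, by showing that the bad case implies $|S(u,v)|\geqslant l(v)-l(u)+1$.

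Choose any $s\in D_L(v)$ (possible since $u<v$ implies $v\ne e$) and set $\beta_s=-v^{-1}\alpha_s\in\Phi^+$, so that $vr_{\beta_s}=sv$; then $u\leqslant sv<v$ gives $\beta_s\in S(u,v)$. To produce $l(v)-l(u)$ further roots, I will construct an injection $\iota\colon S(su,sv)\hookrightarrow S(u,v)\setminus\{\beta_s\}$. Note $su\leqslant u\leqslant sv$ implies $su\leqslant sv$, so $|S(su,sv)|\geqslant l(sv)-l(su)=l(v)-l(u)$ by the general inequality of Section~1, and together with $\beta_s$ this yields $|S(u,v)|\geqslant l(v)-l(u)+1$. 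Given $\alpha\in S(su,sv)$, short checks show $\alpha\ne\beta_s$ (since $sv(\beta_s)=\alpha_s\in\Phi^+$) and $v(\alpha)\in\Phi^-$, hence $vr_\alpha<v$. The analysis splits by whether $s\in D_L(vr_\alpha)$, which is equivalent to $r_\alpha\beta_s\in\Phi^+$. If so, Property~Z (Proposition~\ref{propertyz}) applied to $(u,vr_\alpha)$ gives $u\leqslant vr_\alpha\iff su\leqslant svr_\alpha$; the latter holds, so $\alpha\in S(u,v)$ and I set $\iota(\alpha)=\alpha$.

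The remaining case $s\notin D_L(vr_\alpha)$ is the main obstacle. In simply-laced type $\langle\beta_s,\alpha^\vee\rangle\in\{-1,0,1\}$, so failure forces $\langle\beta_s,\alpha^\vee\rangle=1$ and $\gamma:=\alpha-\beta_s\in\Phi^+$. I propose $\iota(\alpha)=\gamma$. Writing $v(\alpha)=-\delta$ with $\delta\in\Phi^+$, simply-laced arithmetic combined with $\delta\ne\alpha_s$ (else $\alpha=\beta_s$) forces $\delta-\alpha_s\in\Phi^+$, giving $v(\gamma)=\alpha_s-\delta\in\Phi^-$ and thus $vr_\gamma<v$. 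What remains is to verify $u\leqslant vr_\gamma$ and that $\iota$ is injective. For the former, the plan is to use the $A_2$ identity $r_\gamma=r_{\beta_s}r_\alpha r_{\beta_s}$ (valid since $\alpha$ and $-\beta_s$ span an $A_2$ root subsystem in simply-laced type) to rewrite $vr_\gamma=sv\cdot r_\alpha r_{\beta_s}$, and then to combine $(sv)r_\alpha\geqslant su$ with a further lifting argument to upgrade $su$ to $u$. Injectivity within each case is clear; cross-case collisions and the disjointness $\iota(\alpha)\ne\beta_s$ (the latter because $\gamma=\beta_s$ would force $\alpha=2\beta_s\notin\Phi$) should follow from root-arithmetic.

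The hardest step is therefore handling the second case; the simply-laced hypothesis is used both in asserting that $\gamma=\alpha-\beta_s$ is a root and in deriving the $A_2$ identity for $r_\gamma$. An alternative approach I would pursue first is to show that in the bad case one can always choose $s\in D_L(v)$ for which the second case is vacuous; this has held in every small-rank example I have checked, and a careful induction using Propositions~\ref{firstdescent} and~\ref{seconddescent} together with the combinatorics of $[u,v]$ may establish it in general.
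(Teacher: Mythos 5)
This statement is not proved in the paper: it is Conjecture~\ref{descentconjecture}, for which the authors offer only computational verification (via Sage) in types $A_5$ and $D_4$. There is therefore no paper proof to compare your argument against; a complete proof would be a new contribution.

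Your reduction is sensible and your setup is correct: in the ``bad case'' ($D_L(v)\subseteq D_L(u)$ and $u\leqslant sv$ for every left descent $s$ of $v$) you aim to violate the equality $|S(u,v)|=l(v)-l(u)$, which is forced by $Q_{u,v}=1$, by exhibiting an injection $\iota\colon S(su,sv)\hookrightarrow S(u,v)\setminus\{\beta_s\}$ with $\beta_s=-v^{-1}\alpha_s$. The preliminary checks you describe (that $\beta_s\in S(u,v)$, that $\beta_s\notin S(su,sv)$, that $v(\alpha)\in\Phi^-$, that failure of $s\in D_L(vr_\alpha)$ forces $\langle\beta_s,\alpha^\vee\rangle=1$ and hence $\gamma=\alpha-\beta_s\in\Phi^+$ in simply-laced type) are all correct. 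But the heart of the argument is left open, and you say so yourself. In the second case you only \emph{propose} $\iota(\alpha)=\gamma$ and defer the two load-bearing claims --- that $u\leqslant vr_\gamma$ and that $\iota$ is injective --- to ``root-arithmetic'' and an unspecified ``further lifting argument to upgrade $su$ to $u$.'' These are precisely the nontrivial steps. The Bruhat inequality $u\leqslant vr_\gamma$ is not a routine lifting: $vr_\gamma=sv\cdot r_\alpha r_{\beta_s}$ involves a double reflection, and Proposition~\ref{propertyz} (Property~Z) applies only to simple reflections, so the sketched route does not obviously close. Injectivity also requires more: a cross-case collision $\alpha_1=\alpha_2-\beta_s$ with $\alpha_1$ in the first case and $\alpha_2$ in the second is consistent with the inner-product constraints you use (e.g.\ $\langle\beta_s,\alpha_1^\vee\rangle=-1$ and $\langle\beta_s,\alpha_2^\vee\rangle=1$ are compatible with $\alpha_2=\alpha_1+\beta_s$ in simply-laced type), so this must be excluded by a genuine argument rather than ``should follow.'' In short: your contradiction strategy is a reasonable line of attack, but as written this is an outline with an acknowledged gap at the crux, not a proof; the alternative you float at the end (choose $s$ so that the second case is vacuous) is likewise only an observation about small examples.
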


We have checked this (using Sage) for Cartan types $A_5$ and $D_4$.
For $A_5$ we find 1346 pairs $u<v$ such that no descent $s$ of $v$
exists satisfying either (i) or (ii), and for each of these, we have
$Q_{u,v}\neq 1$. For example, we can take $(u,v)=(s_2,s_2s_1s_3s_2)$
and the only descent $s=s_2$ of $v$ does not satisfy either (i) or (ii),
but this does not contradict the conjecture since $Q_{u,v}=1+q$.

\begin{theorem}
  Assume that $\Phi$ is simply-laced and $Q_{u,v}=1$.
  Then Conjecture~\ref{descentconjecture} implies
  (\ref{conjecture}), which is conjectured in~\cite{BumpNakasuji}
  under these assumptions.
\end{theorem}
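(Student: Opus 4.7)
The plan is to prove (\ref{conjecture}) by induction on $l(v)$. The base case $v = e$ forces $u = e$, and both sides collapse to $1$ since $S(u,u) = \emptyset$ and $m_{u,u} = 1$. For the inductive step, assume $u < v$ with $Q_{u,v} = 1$ in the simply-laced root system $\Phi$. Conjecture~\ref{descentconjecture} supplies a simple reflection $s$ satisfying either (i) $sv < v$ and $su > u$, or (ii) $sv < v$, $su < u$, and $u$ is not $\leqslant sv$. In either case $l(sv) = l(v) - 1$, so the pair to which I reduce has strictly smaller $l(v)$.

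Case (ii) is packaged by Proposition~\ref{seconddescent}: part~(iv) yields $Q_{su, sv} = 1$, so both Conjecture~\ref{descentconjecture} and the inductive hypothesis apply to $(su, sv)$; parts~(i) and~(iii) give $S(u,v) = S(su, sv)$ and $m_{u,v} = m_{su, sv}$, so the product formula for $(su, sv)$ transfers verbatim to $(u,v)$. In case~(i) the lifting property (Proposition~\ref{propertyz}) combined with $sv < v$ and $su > u$ forces $u \leqslant sv$, so Proposition~\ref{firstdescent} applies and provides both the disjoint decomposition $S(u,v) = S(u, sv) \sqcup \{-v^{-1}\alpha\}$ and, after applying the Kazhdan--Lusztig involution $q \mapsto q^{-1}$ to (\ref{mdescent}),
\[
m_{u,v} \;=\; \frac{1 - q^{-1}\mathbf{z}^{-v^{-1}\alpha}}{1 - \mathbf{z}^{-v^{-1}\alpha}}\, m_{u, sv}.
\]
Substituting the inductive formula for $m_{u, sv}$ into the right-hand side yields exactly the desired product over $S(u,v)$, with the newly adjoined root $-v^{-1}\alpha$ supplying the extra factor.

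The one genuinely subtle step is verifying that $Q_{u, sv} = 1$ in case~(i), so that the inductive hypothesis is applicable. Because $Q_{u,v} = P_{w_0 v, w_0 u}$ and the relation $u \leqslant sv \leqslant v$ translates under $w \mapsto w_0 w$ into $w_0 v \leqslant w_0 sv \leqslant w_0 u$, what one needs is the monotonicity $P_{x,y} = 1 \Rightarrow P_{x', y} = 1$ for all $x \leqslant x' \leqslant y$, that is, the familiar fact that rational smoothness of a Schubert variety at one torus-fixed point propagates to every higher point. If a self-contained algebraic argument is preferred, one can instead invoke the Carrell--Peterson criterion (cited in the introduction and already used in the proof of Proposition~\ref{seconddescent}(iv)) together with the cardinality computation $|S(u, sv)| = |S(u, v)| - 1 = l(sv) - l(u)$ furnished by Proposition~\ref{firstdescent}, which in the simply-laced case characterizes $Q_{u, sv} = 1$. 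With this step in hand the induction closes cleanly, and the obstacle is thus confined to this single monotonicity input.
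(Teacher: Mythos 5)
Your argument follows the same route as the paper's: take the simple reflection supplied by Conjecture~\ref{descentconjecture}, then reduce via Proposition~\ref{firstdescent} in case~(i) or Proposition~\ref{seconddescent} in case~(ii), inducting on $l(v)$. In fact you are somewhat more careful than the paper's own proof, which in case~(i) simply writes down the product formula for $m_{u,sv}$ without explaining why the inductive hypothesis applies there; your verification that $Q_{u,sv}=1$ --- via $Q_{u,v}=P_{w_0v,w_0u}$ together with the Carrell--Peterson monotonicity $P_{x,y}=1\Rightarrow P_{x',y}=1$ for $x\leqslant x'\leqslant y$ --- makes explicit precisely the step the paper leaves tacit.
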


\begin{proof}
  We assume that $u<v$ and $Q_{u,v}=1$. Choose a left descent $s$ of $v$.
  If $su>u$, then Proposition~\ref{firstdescent} applies. Note
  \[m_{u, sv} = m_{u, sv} (\mathbf{z}) = \prod_{\beta \in S
    (u, sv)} \frac{1 - q^{- 1} \mathbf{z}^{\beta}}{1 -\mathbf{z}^{\beta}} .
  \]
  Now Proposition~\ref{firstdescent} implies (\ref{conjecture})
  for $u,v$.

  On the other hand if $s u<u$, then on Conjecture~\ref{descentconjecture}
  we have $u$ not $\leqslant sv$ and so Proposition~\ref{seconddescent}
  applies, and again the result follows.
\end{proof}

We end with another puzzle. We give the root lattice the usual partial order
in which $x \geqslant y$ if $x - y$ lies in the cone generated by the positive
roots. Then the set $T$ of reflections has a partial order in which if
$\alpha, \beta \in \Phi^+$ then $r_{\alpha} \geqslant r_{\beta}$ if and only
if $\alpha \geqslant b$. Let $\operatorname{AD} (u, v) = \{ r \in T|r u > u, r v < v \}$. We
will write $u \vartriangleleft v$ to denote the covering relation in the
Bruhat order. Thus $u \vartriangleleft v$ if $u < v$ and $l (u) = l (v) - 1$.

\begin{theorem}[Tsukerman and Williams~{\cite{TsukermanWilliams}}, Caselli and
Sentinelli~{\cite{CaselliSentinelli}}]
  Suppose that $\Phi$ is a simply-laced root system. Suppose $u < v$. Then
  $\operatorname{AD} (u, v)$ is nonempty and if \ $t$ is a minimal element, then $u
  \vartriangleleft t u \leqslant v$ and $u \leqslant t v \vartriangleleft u$.
  In this case
  \[ R_{u, v} = q R_{t u, t v} + (q - 1) R_{u, t v} . \]
\end{theorem}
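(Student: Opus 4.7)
The plan is to establish the three assertions in order: nonemptiness of $\operatorname{AD}(u,v)$, the covering relations forced by minimality of $t$ in the root order, and finally the R-polynomial recursion. I expect the last step to be the main obstacle, since it generalizes the standard Kazhdan-Lusztig recursion from a simple reflection to an arbitrary reflection, and this is exactly where the simply-laced hypothesis must be used.

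For nonemptiness I would argue by counting reflections. The reflections $r$ with $rv<v$ are in bijection with the inversion set of $v^{-1}$, hence there are exactly $\ell(v)$ of them; similarly $\ell(u)$ reflections descend $u$. If $\operatorname{AD}(u,v)$ were empty, then every reflection descending $v$ would also descend $u$, forcing $\ell(v)\leqslant\ell(u)$ and contradicting $u<v$.

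For the covering properties, let $t\in\operatorname{AD}(u,v)$ be minimal in the root order. I would first derive $u\leqslant tv$ and $tu\leqslant v$ directly from the lifting property (Proposition~\ref{propertyz}) applied to the general reflection $t$, since $tu>u$ and $tv<v$. To upgrade to covers, suppose $\ell(tv)<\ell(v)-1$. Choose a reduced expression $v=s_1\cdots s_N$; by the strong exchange property, $tv$ is obtained by deleting some $s_i$, and $\ell(v)-\ell(tv)$ is odd and at least $3$. One may then produce a reflection $t'$ whose associated positive root is strictly below $\alpha_t$ in the root order, with $t'v<v$; case analysis via the lifting property shows that after possibly replacing $t'$ by a further descent, one finds an element of $\operatorname{AD}(u,v)$ strictly smaller than $t$, contradicting minimality. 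A symmetric argument rules out $\ell(tu)>\ell(u)+1$, so $u\vartriangleleft tu$ and $tv\vartriangleleft v$.

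For the R-polynomial recursion, I would induct on $\ell(v)$. Choose a simple reflection $s$ with $sv<v$. If $t=s$ then by hypothesis $su>u$ and the claimed identity is literally the $su>u$ case of the standard recursion from~\cite{KazhdanLusztig} \S 2. If $t\neq s$, the simply-laced hypothesis is used twice: first, $s(\alpha_t)$ is again a positive root (not a scalar multiple of $\alpha_s$), so $sts$ is again a reflection and the reflection order is compatible with conjugation by $s$; second, no ``doubled'' roots intervene to spoil the length parities. Apply the standard simple-reflection recursion to each of $R_{u,v}$, $R_{tu,tv}$, $R_{u,tv}$, splitting into the four subcases according to whether $su\gtrless u$ and whether $stu\gtrless tu$ (and likewise for $v$). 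In every subcase, the lifting property forces compatibility between these ascent/descent patterns, and the inductive hypothesis applies to the pair $(u',v')$ of smaller length (with $v'=sv$, and $u'$ either $u$ or $su$) together with the reflection $s t s$ or $t$ in its $\operatorname{AD}$ set. Combining the resulting expressions and collecting terms reduces the desired identity to a pure algebraic identity in $q$, which is verified by inspection. The bookkeeping in this case split is the only genuinely delicate part; the simply-laced condition is precisely what makes the case split finite and clean.
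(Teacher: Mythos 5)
The paper does not prove this theorem; it is quoted directly from Tsukerman--Williams and Caselli--Sentinelli, so there is no internal proof to compare against. You are therefore attempting a from-scratch reconstruction, and there are several genuine gaps.

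Your nonemptiness argument by counting inversion sets is fine: the left inversions of $v$ number $\ell(v)$, those of $u$ number $\ell(u)$, and if every reflection descending $v$ also descended $u$ we would get $\ell(v)\leqslant\ell(u)$, contradicting $u<v$. That part does not even need the simply-laced hypothesis.

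The covering step is where the reasoning breaks down, and it breaks down in a circular way. You write that you would ``derive $u\leqslant tv$ and $tu\leqslant v$ directly from the lifting property (Proposition~\ref{propertyz}) applied to the general reflection $t$.'' But Proposition~\ref{propertyz} (Deodhar's Property Z) is valid only for \emph{simple} reflections. It fails for general reflections: in $A_2$ with $u=s_1$, $v=s_1s_2s_1$ and $t=s_1s_2s_1$ one has $tu>u$ and $tv<v$ yet $tv=1\not\geqslant u$. The content of the Tsukerman--Williams theorem is precisely that minimality of $t$ in the reflection order rescues the lifting property in simply-laced type. So you are invoking the conclusion (a lifting property for $t$) to prove the conclusion. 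The subsequent ``produce a reflection $t'$ strictly smaller \dots case analysis shows \dots contradicting minimality'' is exactly where the real work lies, and it is left entirely unexplained; this is not a proof sketch, it is a restatement of the goal.

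The $R$-polynomial step has a separate structural gap. You induct on $\ell(v)$, picking a simple left descent $s$ of $v$, and in the case $t\neq s$ you want to apply the inductive hypothesis to $(u',sv)$ with the reflection $t$ or $sts$. But the inductive hypothesis only concerns reflections that are \emph{minimal} in $\operatorname{AD}(u',sv)$, and you never argue that minimality of $t$ in $\operatorname{AD}(u,v)$ passes to minimality of $t$ or $sts$ in $\operatorname{AD}(u',sv)$. Conjugation by $s$ does not preserve the root partial order, so this is not automatic. Without that, the inductive step does not close. The claims about where the simply-laced hypothesis enters (``$s(\alpha_t)$ is again a positive root,'' ``no doubled roots'') are either true in all types or too vague to pinpoint the role of simply-lacedness; the actual place the hypothesis is used in the literature is in controlling which roots can appear between $\alpha_t$ and the simple roots in the root order, which is a specific combinatorial lemma your outline does not supply.

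In short: the first paragraph of your argument is correct, but the covering step is circular and the recursion step has an unaddressed inductive mismatch. For a genuine proof you should consult Tsukerman--Williams (arXiv:1404.1923) and Caselli--Sentinelli, where these points are handled with dedicated lemmas on the interaction between the root order and the Bruhat order in simply-laced type.
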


Suppose in the setting of this theorem that $t = r_{\alpha}$
($\alpha\in\Phi^+$). Let $\beta =
- v^{- 1} \alpha$. Then $\beta$ is a positive root. To generalize
Theorem~\ref{rrecursion} it is natural to ask whether
\begin{equation}
 \label{adrecursion}
 r_{u, v} = q r_{s u, s v} + (q - 1)
   \frac{\mathbf{z}^{\beta}}{\mathbf{z}^{\beta} - 1} r_{u, s v}, \qquad
   \beta = - v^{- 1} \alpha .
\end{equation}
This is often but not always true. For $A_3$, it fails in the following cases:
\[ \begin{array}{|l|l|l|l|l|}
     \hline
     u & v & t & P_{u, v} & Q_{u, v}\\
     \hline
     s_1 & s_1 s_2 s_3 s_2 s_1 & s_1 s_2 s_1 & 1 + q & 1\\
     \hline
     s_3 & s_1 s_2 s_3 s_2 s_1 & s_2 s_3 s_2 & 1 + q & 1\\
     \hline
     s_1 s_3 & s_1 s_2 s_3 s_2 s_1 & s_1 s_2 s_1 & 1 + q & 1 + q\\
     \hline
     s_1 s_3 & s_1 s_2 s_3 s_2 s_1 & s_2 s_3 s_2 & 1 + q & 1 + q\\
     \hline
     s_2 & s_2 s_1 s_3 s_2 & s_1 s_2 s_1 & 1 + q & 1 + q\\
     \hline
     s_2 & s_2 s_1 s_3 s_2 & s_2 s_3 s_2 & 1 + q & 1 + q\\
     \hline
     s_2 & s_3 s_2 s_1 s_3 s_2 & s_1 s_2 s_1 & 1 & 1 + q\\
     \hline
     s_2 & s_1 s_2 s_1 s_3 s_2 & s_2 s_3 s_2 & 1 & 1 + q\\
     \hline
\end{array} \]
Except in these cases, we have not only (\ref{adrecursion})
but also
\begin{equation}
\label{admrec}
\overline{m_{u,v}}=\left(\frac{1-q\mathbf{z}^\beta}{1-\mathbf{z}^\beta}\right)
\overline{m_{tu,tv}}.
\end{equation}
We may conjecture that (\ref{adrecursion}) and (\ref{admrec}) are
true if both $P_{u,v}=Q_{u,v}=1$. (This has been checked for $A_4$
as well as $A_3$.) This does not imply the conjecture
(\ref{conjecture}) because of the unwanted requirement $P_{u,v}=1$,
and so Conjecture~\ref{descentconjecture} seems a better approach.

\bibliographystyle{hplain}
\bibliography{casselman}

\end{document}